\newtheorem{thm}{Theorem}[section]
\newtheorem{cor}[thm]{Corollary}
\newtheorem{defn}[thm]{Definition}
\newtheorem{lem}[thm]{Lemma}
\newtheorem{prob}[thm]{Problem}
\newtheorem{prop}[thm]{Proposition}
\numberwithin{equation}{section}
\def\ni{\noindent}
\def\N{\mathbb{N}}
\def\T{\mathbb{T}}
\def\G{\mathbb{G}}
\def\H{\mathbb{H}}
\def\D{\mathbb{D}}
\def\V{\mathbb{V}}
\def\R{\mathbb{R}}
\def\A{\mathbb{A}}
\def\K{\mathbb{K}}
\def\P{\mathbb{P}}
\def\W{\mathbb{W}}
\def\C{\mathbb{C}}
\def\L{\mathbb{L}}
\def\cG{\mathcal{G}}
\def\I{\mathscr{I}}
\def\sT{\mathsf{T}}
\title{\textbf{\sc  A Study on Sprout Graphs}}
\author{Johan Kok}
\affil{\small Tshwane Metropolitan Police Department\\ City of Tshwane, Republic of South Africa \\ {\tt kokkiek2@tshwane.gov.za}}
\author{Naduvath Sudev}
\affil{\small Department of Mathematics\\ Vidya Academy of Science \& Technology \\ Thalakkottukara, Thrissur - 680501, India.\\ {\tt sudevnk@gmail.com}}
\date{}
\begin{document}
\maketitle

\begin{abstract}
\ni Sprout graphs are finite directed graphs matured over a finite subset of  the non-negative time line. A simple undirected connected graph on at least two vertices is required to construct an infant graph to mature from. The maxi-max arc-weight principle and the maxi-min arc-weight principle are introduced in this paper to determine the maximum and minimum maturity weight of a sprout graph. These principles demand more mathematical debates for logical closure. Since complete graphs, paths, stars and possibly cycles form  part of the skeleton of all graphs, the introduction of results for these family of sprout graphs is expected to lay a good research foundation. 
\end{abstract}

\ni \textbf{Keywords:} Sprouting, sprout graph, infant graph, directed graph, index pattern, arc weight, maturity weight.
\vspace{0.2cm}

\noindent\textbf{AMS Classification Numbers:} 05C05, 05C20, 05C38, 05C62.

\section{Introduction}

For general notation and concepts in graph and digraph theory, we refer to \cite{BM1,CL1,GY1,DBW}. Generally all graphs mentioned in this paper other than sprout graphs, are non-trivial, simple, connected, finite and undirected graphs. The trivial graph $K_1$ will be addressed as a special case wherever applicable.

\section{Sprout Graphs}

The idea of {\em sprouting} resembles neurological growth of good or malicious networks or virus infection through Information and Communication Technology Networks. A sprout graph can generally be considered as a simple and finite directed graph matured on a time line from graphs on at least two vertices. The idea of sprouting and the notion of sprout graphs can be described as follows.

\begin{defn}\label{Defn-2.1}{\rm 
Consider a graph $G$ on $n$ vertices, where $n\ge 2$, with a fixed default vertex labeling $D=\{d_1, d_2, d_3, \ldots , d_n\} \subseteq \D = \{d_i: i \in \N\}$ and let $V(G)=\{v_i: 1 \le i \le n\}\subseteq \V = \{v_i: i \in \N\}$. Define a random bijective function $f:D \to V$ so that the vertices of $G$ are labeled according to the range of $f(D)$. The range of $f$ is called the \textit{index pattern} of $G$ and is denoted by $\I$.}
\end{defn} 

In other words, we have $\I=\{f(d_i)=v_j:1 \le i,j \le n\}$. It can be noted that a graph $G$ on $n$ vertices can have $n!$ possible index patterns. 

We denote the time line corresponding to the index pattern $\I$ by $\sT_{\I}$. By the term \textit{sprout}, we mean an ordered pair $(i,j)$ of positive integers and an edge $v_iv_j$ of $G$ can be reduced to a sprout $(i,j)$ if and only if $i<j$. If all edges of $G$ are reduced to sprouts the resultant graph is called an \textit{infant graph}. When the context is clear we shall refer to either the graph $G$ or the infant graph $G$. Invoking these definitions, the notion of a sprout graph matured from a given graph can be described as follows.

\begin{defn}\label{Defn-2.2}{\rm 
Let $\G_{\I}$ be a directed graph formed from the infant graph $G$ such that every arc $(v_i, v_j)$ of $\G_{\I}$ is formed from the sprout $(i,j)$ at time $t = |i-j|$ with, $t \in \sT_{\I} = \{0, 1,2, \ldots , m_{\I}\} \subseteq \{0, 1, 2, 3, \ldots, n-1\}$ and $m_{\I} = max|i-j|, ~ \forall~ (i,j)$. At $t= m_{\I}$ the sprouting has matured and the resultant directed graph $\G_{\I}$ is called the \textit{sprout graph} matured from the given graph $G$.}
\end{defn}

\begin{defn}{\rm 
A {\em sprouting graph}, denoted by $\G_{t=k}$, is the directed graph maturing from the given infant graph $G$ which has maturity level,  $t=k<m_{\I}$.}
\end{defn}

In real applications the sprout $(i,j)$ can evolve (or grow) over the time interval $[0,j - i)$ with arcing at $t = j- i$. Note that at $t=0$ the graph $G$ is reduced to an infant graph with sprouts $(i,j), \forall~ v_iv_j \in E(G)$ and $i<j$, attached to vertex $v_i$ and the number of sprouts attached to $v_i$ at $t=0$ is equal to $d^+_{\G}(v_i)$ in the sprout graph $\G_{\I}$. In view of this fact let us define the following notions.

\begin{defn}{\rm 
A vertex $v$ in a sprout graph $\G_{\I}$, having $d_{\G}(v_i)=d^-_{\G}(v_i)$ is called an \textit{adult vertex} and a vertex $u$ in $\G_{\I}$ having $d_{\G}(v_i)=d^+_{\G}(v_i)$ is called an \textit{initiator vertex}.}
\end{defn}

In view of the above notions, the existence of initiator and adult vertices for the sprout graphs matured from the infant graphs in respect of a given graph is established in the following proposition. 

\begin{prop}\label{prop-1.1}
All sprout graphs $\G_{\I}$, matured from the infant graphs in respect of a graph $G$ on $n \ge 2$ vertices, have at least one adult vertex and at least one initiator vertex. 
\end{prop}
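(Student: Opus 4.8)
The plan is to exploit the observation, immediate from Definitions~\ref{Defn-2.1} and~\ref{Defn-2.2}, that an index pattern $\I$ simply endows the vertices of $G$ with distinct labels drawn from $\{1,2,\ldots,n\}$, and that every arc of $\G_{\I}$ is oriented from its lower-indexed end-vertex to its higher-indexed end-vertex (since a sprout $(i,j)$ requires $i<j$ and gives the arc $(v_i,v_j)$). Hence $\G_{\I}$ is precisely the acyclic orientation of $G$ induced by this linear order on the labels, a vertex is an adult vertex exactly when it carries the largest label among itself and its neighbours, and it is an initiator vertex exactly when it carries the smallest such label.

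First I would single out the vertex, call it $v_n$, that receives the label $n$ under $\I$. Because $G$ is connected and $n\ge 2$, the vertex $v_n$ has at least one neighbour, and every neighbour $v_j$ of $v_n$ satisfies $j<n$; therefore each edge incident with $v_n$ becomes an arc $(v_j,v_n)$ directed into $v_n$. Consequently $d^+_{\G}(v_n)=0$, so $d_{\G}(v_n)=d^-_{\G}(v_n)$ and $v_n$ is an adult vertex of $\G_{\I}$. Symmetrically, I would examine the vertex $v_1$ carrying the label $1$: connectedness and $n\ge 2$ again force $v_1$ to be non-isolated, and now every neighbour $v_j$ of $v_1$ has $j>1$, so every edge at $v_1$ yields an arc $(v_1,v_j)$ leaving $v_1$. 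Hence $d^-_{\G}(v_1)=0$, giving $d_{\G}(v_1)=d^+_{\G}(v_1)$, so $v_1$ is an initiator vertex. Since $\I$ was an arbitrary index pattern and $G$ an arbitrary graph on $n\ge 2$ vertices, this proves the statement for all sprout graphs matured from infant graphs of $G$.

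The argument is genuinely short, and the only place the hypotheses are needed is the non-isolation of the extreme-labelled vertices: that is exactly where connectedness and $n\ge 2$ enter, and for $n=2$ nothing degenerates since $v_1$ and $v_n$ are then distinct, one being the unique initiator and the other the unique adult. An alternative, slightly more structural route would be to take a longest directed path in the acyclic digraph $\G_{\I}$ and note that its source must be an initiator vertex and its sink an adult vertex, but the extreme-label argument is the most economical and I would adopt it.
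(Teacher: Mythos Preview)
Your proof is correct and follows essentially the same approach as the paper: both arguments single out the vertex labelled $n$ to exhibit an adult vertex (since no sprout $(n,i)$ exists) and the vertex labelled $1$ to exhibit an initiator vertex (since no sprout $(i,1)$ exists). Your version is simply more detailed, making explicit the acyclic-orientation interpretation and the role of connectedness in ensuring these extreme-labelled vertices are non-isolated.
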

\begin{proof}
Since a sprout $(n,i), i \le n-1$ can never exist, we have $d_{\G}(v_n) = d^-_{\G}(v_n)$ in $\G_{\I}$. Similarly, since a sprout $(i,1)$, $i \ge 2$ can never exist, we have $d_{\G}(v_1) = d^+_{\G}(v_1)$ in $\G_{\I}$. Therefore, every sprout graph $\G_{\I}$ has at least one initiator and an adult vertex.
\end{proof}

\ni In terms of the index patterns of two or more graphs, an index pattern for different operations of these graphs can be formed as follows.

\begin{defn}\label{Defn-2.5}{\rm 
Let the two given graphs $G_1$ and $G_2$ have the initial default index patterns $D_1=\{d_1, d_2, d_3,\ldots , d_n\}$ and $D_2=\{d'_1, d'_2, d'_3, \ldots , d'_m\}$, where $d'_j\neq d_j$, in $\D$. We define a new labeling set $D_1 \uplus D_2$ for the extended graph $G_1 \ast G_2$ by $D_1 \uplus D_2 = \{d_1, d_2, d_3, \ldots , d_n, d'_{1+n}, d'_{2+n}, d'_{3+n}, \ldots , d'_{m+n}\}$, where $\ast$ is some binary operation (either union or join of $G_1$ and $G_2$) between $G_1$ and $G_2$. }
\end{defn}

Note that the sets $D_1 \uplus D_2$ and $D_2 \uplus D_1$ need not be equal. Also, note that this notion can be applied to index patterns  $\I_1$ and $\I_2$ as well. Hence, we propose the following result.

\begin{cor}\label{Cor-1.2a}{\rm 
If $G = \bigcup\limits^{k}_{i=1}G_i$, then $\G_{\I} = \bigcup\limits^{k}_{i=1}\G_{i,\I}$ has at least $k$ adult and initiator vertices.}
\end{cor}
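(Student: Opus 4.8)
The plan is to reduce the statement to Proposition~\ref{prop-1.1} applied componentwise. First I would observe that, by the construction of the combined labeling $D_1 \uplus D_2 \uplus \cdots \uplus D_k$ in Definition~\ref{Defn-2.5}, the label sets assigned to the vertices of the distinct summands $G_i$ are pairwise disjoint blocks of consecutive integers, and within each block the labels occur in the same relative order as in the default labeling of $G_i$ taken alone. Since a sprout $(i,j)$, and hence the orientation of the corresponding arc, depends only on the comparison $i<j$, it follows that the sprout graph $\G_{\I}$ of $G=\bigcup_{i=1}^{k}G_i$ is precisely the disjoint union $\bigcup_{i=1}^{k}\G_{i,\I}$, each $\G_{i,\I}$ sitting inside $\G_{\I}$ as a component on its own block of vertices with no arcs leaving that block.

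Next I would note that, because there are no arcs between the blocks, for every vertex $v$ of $\G_{\I}$ lying in the block of $G_i$ we have $d_{\G}(v)=d_{\G_{i,\I}}(v)$, $d^{-}_{\G}(v)=d^{-}_{\G_{i,\I}}(v)$ and $d^{+}_{\G}(v)=d^{+}_{\G_{i,\I}}(v)$; consequently $v$ is an adult (respectively initiator) vertex of $\G_{\I}$ if and only if it is an adult (respectively initiator) vertex of the component $\G_{i,\I}$. Applying Proposition~\ref{prop-1.1} to each $G_i$, every sprout graph $\G_{i,\I}$ contains at least one adult vertex $a_i$ and at least one initiator vertex $b_i$. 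Since the blocks are pairwise disjoint, the vertices $a_1,a_2,\ldots,a_k$ are pairwise distinct and so are $b_1,b_2,\ldots,b_k$, which yields at least $k$ adult vertices and at least $k$ initiator vertices in $\G_{\I}$.

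The only genuinely delicate point — and the step I would treat most carefully — is the order-preservation claim in the first paragraph: one must check that appending the shifted labels of $G_{i+1}$ after those of $G_i$ does not reverse any comparison internal to a summand, so that the classification of a vertex as adult or initiator is unaffected by placing the summand inside the union (note also that $D_1\uplus D_2\neq D_2\uplus D_1$ in general, so the blocks must be taken in the stated order). A secondary caveat worth recording is the degenerate case in which some summand equals the trivial graph $K_1$: its single vertex has $d_{\G}(v)=d^{-}_{\G}(v)=d^{+}_{\G}(v)=0$ and therefore counts vacuously as both an adult and an initiator vertex, so the bound $k$ persists even if trivial summands are admitted. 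With these observations in place the corollary follows at once.
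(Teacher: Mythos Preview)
Your proposal is correct and follows essentially the same approach as the paper, which dispatches the corollary in one line as ``an immediate consequence of Proposition~\ref{prop-1.1}.'' You have simply unpacked that immediate consequence in more detail---noting that the summands occupy disjoint label blocks with no arcs between them, so Proposition~\ref{prop-1.1} applies componentwise and the resulting adult and initiator vertices are pairwise distinct---and added a remark on the degenerate $K_1$ case that the paper does not mention.
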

\begin{proof}
The result is an immediate consequence of Proposition \ref{prop-1.1}.
\end{proof}

Recall that the pendant vertices of a tree are called {\em leafs} of that tree. If the given graph $G$ has a pendant vertex, say  $v_j$, then we say that $G-v_j$ is the graph obtained by {\em lobbing off} $v_j$.  

\begin{lem}\label{Lem-2.6}
For a tree $T$ on $n$ vertices there exists at least two index patterns $\I_1$, $\I_2$ such that $\T_{\I_1}$ has exactly one adult vertex and $\T_{\I_2}$ has exactly one initiator vertex. 
\end{lem}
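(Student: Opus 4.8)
The plan is to translate the notions of \emph{adult} and \emph{initiator} vertex into statements about the index-pattern labeling, and then exhibit the two index patterns explicitly by processing the tree from a root. First I would record the elementary observation that an edge $v_iv_j$ of $T$ with $i<j$ yields the arc $v_i\to v_j$ of $\T_{\I}$, so that a vertex $v_k$ is an adult vertex precisely when its label $k$ exceeds the label of every neighbour of $v_k$ (every incident edge then points into $v_k$), while $v_k$ is an initiator vertex precisely when $k$ is smaller than the label of every neighbour (every incident edge then points out of $v_k$). In short, adult vertices are exactly the local maxima, and initiator vertices exactly the local minima, of the labeling viewed as a function on $V(T)$.

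Next I would invoke the standard fact that a tree admits a connected vertex ordering: fix an arbitrary vertex $r$, root $T$ at $r$, and list the vertices as $u_1=r, u_2, \ldots, u_n$ by any linear extension of the ancestor order, so that the parent of $u_k$ occurs before $u_k$; equivalently, each $u_k$ with $k\ge 2$ is adjacent to some $u_j$ with $j<k$. Such an ordering is produced by a BFS or DFS from $r$.

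For $\I_1$ I would assign to $u_k$ the label $n+1-k$. Then $u_1$ carries the maximal label $n$ and hence is an adult vertex, whereas for $k\ge 2$ the neighbour $u_j$ with $j<k$ carries the strictly larger label $n+1-j$, so the corresponding edge points out of $u_k$ and $u_k$ is not adult; thus $\T_{\I_1}$ has exactly one adult vertex. For $\I_2$ I would instead assign to $u_k$ the label $k$. Then $u_1$ carries the minimal label $1$ and is an initiator vertex, while for $k\ge 2$ the neighbour $u_j$ with $j<k$ carries the strictly smaller label $j$, so the corresponding edge points into $u_k$ and $u_k$ is not an initiator; thus $\T_{\I_2}$ has exactly one initiator vertex. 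Finally, since $n\ge 2$, the vertex $u_1$ receives label $n\ne 1$ under $\I_1$ but label $1$ under $\I_2$, so $\I_1\ne\I_2$ and we indeed obtain at least two distinct index patterns with the stated properties.

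There is no serious obstacle; the only points requiring care are fixing the orientation convention correctly (which endpoint of an edge is the tail) and remembering to reverse the label assignment for $\I_1$ relative to $\I_2$. As a by-product of the construction, since the root $r$ was arbitrary, every vertex of $T$ is in fact the unique adult vertex for a suitable index pattern, and likewise the unique initiator vertex for a suitable index pattern; this stronger statement might be worth recording as a remark.
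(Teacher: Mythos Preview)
Your proof is correct. The paper takes a different but closely related route: instead of rooting $T$ at an arbitrary vertex and using a BFS/DFS ordering, it iteratively strips leaves---labeling the leaves of the current subtree with the next block of small indices, lobbing them off, and repeating until a single vertex (or a $K_2$) remains, which then receives the label $n$ (respectively $n-1$ and $n$); reversing the labels yields $\I_2$. Both constructions ultimately produce a vertex ordering in which every vertex but one has a neighbour with a strictly larger index, which is precisely the property needed. The leaf-stripping argument always singles out a center of $T$ as the unique adult vertex, whereas your rooted-ordering argument shows---as you observe in your final remark---that \emph{any} prescribed vertex can be made the unique adult (or initiator) vertex by choosing it as the root; so your approach yields a slightly stronger conclusion with no extra work.
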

\begin{proof}
Consider a tree $T$ on $n$ vertices with $t$ leafs. Label the leafs randomly by $v_1, v_2, v_3, \ldots, v_t$ in an injective manner. Now, lob off the leafs to obtain the subtree $T'$ on $n-t$ vertices having $t'$ leafs. Label these leafs by $v_{t+1}, v_{t+2}, v_{t+3}, \ldots , v_{t+t'}$ injectively in a random manner. Lob off these $t'$ leafs. Repeat the procedure iteratively until we get a single vertex which can be labeled by $v_n$ or we get a $K_2$ whose end vertices can be labeled by $v_{n-1}$ and $v_n$. Then, by Definition \ref{Defn-2.2}, $v_n$ will be the unique adult vertex in the corresponding sprout graph $\T_{\I_1}$. 

In the similar way, we can find out another sprout graph $\T_{\I_2}$ whose vertices can be labeled in the reverse order so that $v_1$ is the unique initiator vertex of $\T_{\I_2}$.
\end{proof}

\begin{cor}\label{Cor-2.6a}
Every graph $G$ has at least two index patterns $\I_1$, $\I_2$ such that the corresponding sprout graph $\G_{\I_1}$ has exactly one adult vertex and $\G_{\I_2}$ has exactly one initiator vertex.
\end{cor}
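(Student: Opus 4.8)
The plan is to reduce the statement for an arbitrary connected graph $G$ to the tree case already settled in Lemma \ref{Lem-2.6}, by passing to a spanning tree and then showing that the extra edges of $G$ cannot create new adult or initiator vertices.

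First I would fix a spanning tree $T$ of $G$, which exists since $G$ is connected on $n \ge 2$ vertices. Applying Lemma \ref{Lem-2.6} to $T$ yields index patterns $\I_1, \I_2$ for which $\T_{\I_1}$ has $v_n$ as its unique adult vertex and $\T_{\I_2}$ has $v_1$ as its unique initiator vertex. Since $V(G) = V(T)$, a vertex labeling of $T$ is simultaneously a vertex labeling of $G$, so $\I_1$ and $\I_2$ are legitimate index patterns of $G$ as well, determining sprout graphs $\G_{\I_1}$ and $\G_{\I_2}$.

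The key observation I would record is that, under a fixed index pattern, a vertex $v_k$ is an adult vertex precisely when $k$ exceeds the index of every neighbour of $v_k$, and it is an initiator vertex precisely when $k$ is smaller than the index of every neighbour; this is just a restatement of the orientation rule $v_iv_j \mapsto (v_i,v_j)$ for $i<j$ in Definition \ref{Defn-2.2}. Because $E(T) \subseteq E(G)$, every neighbour of $v_k$ in $T$ is also a neighbour in $G$; hence if $v_k$ is adult in $\G_{\I_1}$ it is adult in $\T_{\I_1}$, and if $v_k$ is an initiator in $\G_{\I_2}$ it is an initiator in $\T_{\I_2}$. Thus the set of adult vertices of $\G_{\I_1}$ is contained in $\{v_n\}$ and the set of initiator vertices of $\G_{\I_2}$ is contained in $\{v_1\}$. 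Finally, $v_n$ is adult in $\G_{\I_1}$ and $v_1$ is an initiator in $\G_{\I_2}$, directly because $v_n$ (respectively $v_1$) carries the largest (respectively smallest) index among all $n$ vertices, so every edge at $v_n$ is oriented into it and every edge at $v_1$ is oriented out of it (cf.\ the proof of Proposition \ref{prop-1.1}). Hence both containments are equalities, which is exactly the claim.

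The only point requiring care — and where I would expect a referee to push back — is the monotonicity claim that enlarging the edge set can only shrink the adult and initiator sets. Once one accepts the ``local maximum / local minimum in the index pattern'' characterisation of adult and initiator vertices, this is immediate, but it is worth stating explicitly rather than leaving it implicit, since it is precisely what makes the spanning-tree reduction go through and is not transparent from Definition \ref{Defn-2.2} alone.
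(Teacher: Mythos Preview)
Your proof is correct and follows the same route as the paper: reduce to the tree case via a spanning tree and invoke Lemma~\ref{Lem-2.6}. The paper's own proof is the single sentence ``The result follows immediately from Lemma~\ref{Lem-2.6} and from the fact that every connected graph has a spanning tree,'' so your version is strictly more detailed --- in particular, you make explicit the monotonicity step (adding edges can only destroy, never create, adult or initiator vertices) that the paper leaves to the reader.
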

\begin{proof}
The result follows immediately from Lemma \ref{Lem-2.6} and from the fact that every connected graph has a spanning tree.
\end{proof}


\begin{defn}\label{Defn-2.7}{\rm 
The \textit{arc-weight} of an arc $(v_i,v_j)$ of a sprout graph, denoted by $w(v_i,v_j)$, is defined as $w(v_i,v_j)=j-i$. If all arcs are labeled by $a_i, i = 1, 2, 3, \ldots, \epsilon(\G_{\I})$ in an injective manner, then the \textit{maturity weight} of the sprout graph $\G_{\I}$, denoted by $mw(\G_{\I})$, is defined to be $mw(\G_{\I}) = \sum \limits_{i=1}^{\epsilon (\G_{\I})}w(a_i)$. }
\end{defn}

It can be observed that the sum of arc-weights in a sprout graph $\G_{\I},$ $\forall \I$ need not be a constant, and this value depends on the random labeling of its vertices. Hence, for some index pattern $\I^{\ast}$ within the possible $n!$ index patterns we obtain, $\sum \limits_{i=1}^{\epsilon (\G_{\I^{\ast}})}w(a_i)=\min\{\sum \limits_{i=1}^{\epsilon (\G_{\I})}w(a_i)\}$. Similarly, for some index pattern $\I^{\prime}$ within the possible $n!$ index patterns, we have $\sum \limits_{i=1}^{\epsilon (\G_{\I^{\prime}})}w(a_i)=\max\{\sum \limits_{i=1}^{\epsilon (\G_{\I})}w(a_i)\}$.  Note that the index patterns $\I^{\ast}$ and $\I^{\prime}$ need not be necessarily unique. Henceforth, $\I^*_i$ and $\I^{\prime}_i$ will denote index patterns corresponding to $\min\{\sum \limits_{i=1}^{\epsilon (\G_{\I})}w(a_i)\}$ and $\max\{\sum \limits_{i=1}^{\epsilon (\G_{\I})}w(a_i)\},$ respectively.

\ni The following is a straight forward result which is important in our further studies.

\begin{lem}
Consider a graph $G$ on $n$ vertices and the index patterns $\I_1 = \{v_1, v_2, v_3,\ldots,v_n\}$ and $\I_2 = \{v_{1+k},v_{2+k},v_{3+k},\ldots,v_{n+k}\}$, where $k \in \N_0$. Then, $mw(\G_{\I_1}) = mw(\G_{\I_2})$, $\min(mw(\G_{\I^{\ast}_1}))=\min(mw(\G_{\I^{\ast}_2}))$ and\\ $\max(mw(\G_{\I^{\prime}_1}))=\max(mw(\G_{\I^{\prime}_2}))$.
\end{lem}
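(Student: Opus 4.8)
The plan is to show that the label-shift map $\phi\colon v_i\mapsto v_{i+k}$ induces a weight-preserving bijection from the arc set of $\G_{\I_1}$ onto the arc set of $\G_{\I_2}$, and, more generally, a weight-preserving correspondence between the family of all sprout graphs built on the label block $\{v_1,\dots,v_n\}$ and the family built on $\{v_{1+k},\dots,v_{n+k}\}$. The whole argument rests on the observation that arc-weights are translation invariant: $w(v_{i+k},v_{j+k})=(j+k)-(i+k)=j-i=w(v_i,v_j)$.

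First I would note that $\I_1$ and $\I_2$ describe the same underlying graph $G$; the label that $\I_2$ assigns to a given vertex of $G$ is precisely the $\phi$-image of the label assigned by $\I_1$. Hence an edge of $G$ that $\I_1$ reduces to the sprout $(i,j)$ with $i<j$ is, under $\I_2$, reduced to the sprout $(i+k,j+k)$, and since $k\in\N_0$ we still have $i+k<j+k$, so no orientation is reversed and $\phi$ carries the arc $(v_i,v_j)$ of $\G_{\I_1}$ bijectively to the arc $(v_{i+k},v_{j+k})$ of $\G_{\I_2}$. Moreover $m_{\I_1}=\max|i-j|=\max|(i+k)-(j+k)|=m_{\I_2}$, so $\sT_{\I_1}=\sT_{\I_2}$. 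Using Definition \ref{Defn-2.7} and the translation invariance noted above, and the fact that $\epsilon(\G_{\I_1})=|E(G)|=\epsilon(\G_{\I_2})$, summing the arc-weights over the matched arcs yields
\[
mw(\G_{\I_2})=\sum_{i=1}^{\epsilon(\G_{\I_2})}w(a_i)=\sum_{i=1}^{\epsilon(\G_{\I_1})}w(a_i)=mw(\G_{\I_1}).
\]

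For the two extremal statements I would argue that $\phi$ also sets up a bijection between the $n!$ index patterns of $G$ that use the block $\{v_1,\dots,v_n\}$ and the $n!$ index patterns that use the block $\{v_{1+k},\dots,v_{n+k}\}$, since every permutation of one block corresponds under $\phi$ to a permutation of the other; and matched patterns have equal maturity weight by the computation above. Thus the two families realise exactly the same set of maturity-weight values, and taking the minimum, respectively the maximum, of this common set gives $\min(mw(\G_{\I^{\ast}_1}))=\min(mw(\G_{\I^{\ast}_2}))$ and $\max(mw(\G_{\I^{\prime}_1}))=\max(mw(\G_{\I^{\prime}_2}))$. I do not anticipate a genuine obstacle here; the only point requiring a moment's care is verifying that the shift leaves every sprout's orientation intact, which is exactly the place where the hypothesis $k\ge 0$ is needed (it also keeps all shifted indices in the admissible range).
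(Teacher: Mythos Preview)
Your proof is correct and rests on exactly the same idea as the paper's: the translation invariance $|(i+k)-(j+k)|=|i-j|$. The paper dispatches the lemma in a single line invoking this identity, whereas you spell out the bijection $\phi$, the orientation preservation, and the correspondence of the two $n!$-families of index patterns; this is simply a more explicit rendering of the same argument rather than a different approach.
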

\begin{proof}
The results follow from the fact that $|(i+k)-(j+k)|=|i-j|$.
\end{proof}

For graphs $G_i$, $ 1\le i \le t$ and corresponding index patterns $\I_1,\I_2,\I_3, \ldots , \I_t$, consider $H = \bigcup\limits_{i=1}^{t}G_i$. Then, by Definition \ref{Defn-2.5} and Corollary \ref{Cor-1.2a} it can be followed that $mw(\H_{\I}) = \sum\limits_{i=1}^{t}mw(\G_{\I_i})$. We note that the underlying graph of $\G_{t=i}$ is a subgraph of $G$. Hence, for some $j$ and $j\le i\le m_{\I}$, the underlying graph of $\G_{t=i}$ is a spanning subgraph of $G$.

\begin{thm}\label{Thm-2.12}
For any graph $G$, there exists an index pattern $\I$ such that $\G_{t=1}$ is a directed Hamilton path of the sprout graph $\G_{\I}$ if and only if $G$ contains a Hamilton path.
\end{thm}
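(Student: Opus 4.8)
The plan is to exploit the simple observation that the arcs of $\G_{t=1}$ are precisely the arcs of arc-weight $1$: by Definitions \ref{Defn-2.2} and \ref{Defn-2.7}, an arc $(v_i,v_j)$ is formed at time $t=|i-j|$, so the arcs present at maturity level $t=1$ are exactly those arising from sprouts $(i,i+1)$ with $v_iv_{i+1}\in E(G)$. Hence $E(\G_{t=1})\subseteq\{(v_i,v_{i+1}):1\le i\le n-1\}$, a set of size $n-1$, and every such arc points from the lower-indexed to the higher-indexed endpoint. The whole argument then reduces to deciding when this collection of at most $n-1$ forward steps happens to span $V(G)$.

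For the sufficiency direction I would start from a Hamilton path $u_1u_2\cdots u_n$ of $G$ and build the index pattern $\I$ by choosing the bijection $f$ of Definition \ref{Defn-2.1} so that $u_i$ receives the label $v_i$ for every $i$. Then for each $i\in\{1,\dots,n-1\}$ the edge $u_iu_{i+1}=v_iv_{i+1}$ reduces to the sprout $(i,i+1)$, which arcs at $t=1$; so $\G_{t=1}$ contains the directed path $v_1\to v_2\to\cdots\to v_n$. Since no sprout $(i,j)$ with $j-i\neq 1$ arcs at $t=1$, these $n-1$ arcs are the only arcs of $\G_{t=1}$, and therefore $\G_{t=1}$ \emph{equals} this directed Hamilton path of $\G_{\I}$.

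For the necessity direction, suppose some index pattern $\I$ gives a $\G_{t=1}$ that is a directed Hamilton path. By the opening observation $E(\G_{t=1})$ is a subset of the $(n-1)$-element set $\{(v_i,v_{i+1}):1\le i\le n-1\}$; but a spanning path on $n$ vertices has exactly $n-1$ arcs, so every one of these candidate arcs must actually be present, forcing $v_iv_{i+1}\in E(G)$ for all $i$. Consequently $v_1v_2\cdots v_n$ is a Hamilton path of $G$.

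I do not expect a serious obstacle here; the points that need care are the bookkeeping translation between an index pattern (the labeling $f$) and an ordering of the vertices along a path, the remark that $\G_{t=1}$ must \emph{equal}, not merely contain, the directed path — it is this that forces the arc count and hence all $n-1$ edges — and a quick sanity check of the degenerate case $n=2$, where $G=K_2$, $m_{\I}=1$, $\G_{t=1}=\G_{\I}$ consists of the single arc $(v_1,v_2)$, and both sides of the equivalence hold trivially.
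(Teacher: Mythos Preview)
Your proposal is correct and follows essentially the same approach as the paper: label the vertices consecutively along a Hamilton path to obtain $\G_{t=1}=P_n^{\rightarrow}$ for the forward direction, and read off a Hamilton path of $G$ from the arcs of $\G_{t=1}$ for the converse. Your converse is slightly more explicit than the paper's (you use the inclusion $E(\G_{t=1})\subseteq\{(v_i,v_{i+1})\}$ together with an arc count, whereas the paper simply notes that later times only add arcs so the underlying Hamilton path persists in $G$), but this is a cosmetic difference rather than a different method.
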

\begin{proof}
Assume that the given graph $G$ on $n$ vertices has a Hamilton path, say $P_n$. Label the vertices from any end vertex of $P_n$ through the consecutive adjacent vertices by $v_1, v_2, v_3, \ldots, v_n$ in an injective manner. Clearly, $(i+1)-i=1$, for all $1\le i\le n-1$. Hence, $\G_{t=1} = P^\rightarrow_n$.

Conversely, assume that for a graph $G$ on $n$ vertices, $\G_{t=1} = P^\rightarrow_n$. As time proceeds, $2 \le t \le m_{\I}$, only arcs between some vertex pairs are added. Hence, $P_n$ is contained completely in the underlying graph of the sprout graph $\G_{\I}$. Therefore, graph $G$ contains a Hamilton path.
\end{proof}

\begin{cor}
The underlying graph of the sprouting graph $\G_{t=1}$ of a given graph $G$ is acyclic.
\end{cor}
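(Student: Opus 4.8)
The plan is to exploit how restrictive the arc set of $\G_{t=1}$ is. By Definition \ref{Defn-2.2}, an arc $(v_i,v_j)$ enters the sprouting graph $\G_{t=1}$ exactly when it is formed from a sprout $(i,j)$ at time $t=|i-j|=1$; since $i<j$ this forces $j=i+1$. Hence, once an index pattern $\I$ is fixed, the arcs of $\G_{t=1}$ are precisely those $(v_i,v_{i+1})$ for which $v_iv_{i+1}\in E(G)$, and the underlying undirected graph of $\G_{t=1}$ has edge set contained in $\{v_iv_{i+1}:1\le i\le n-1\}$, i.e.\ in the edge set of the path $v_1v_2v_3\cdots v_n$ on the labeled vertex set.

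First I would record the above description, so that the underlying graph of $\G_{t=1}$ is exhibited as a subgraph of this spanning path $P$. Then I would invoke the elementary fact that every subgraph of a path — more generally, of any forest — is again a forest, hence acyclic: any cycle in the underlying graph of $\G_{t=1}$ would be a cycle in $P$, and $P$ has none. So the underlying graph of $\G_{t=1}$ is acyclic (indeed a linear forest). If a self-contained argument is preferred, I would instead argue by contradiction: if $C$ were a cycle in the underlying graph of $\G_{t=1}$, take the vertex $v_k$ on $C$ of largest index; every neighbour of $v_k$ in $\G_{t=1}$ has index $k-1$ or $k+1$, and by maximality $v_{k+1}\notin C$, so the two $C$-neighbours of $v_k$ would both be $v_{k-1}$, impossible in a simple graph.

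I do not foresee a real obstacle here. The only point that deserves a word of care is that the claim concerns the \emph{underlying} undirected graph: an argument based merely on indices strictly increasing along each arc rules out directed cycles but not undirected ones, so the linear-forest observation (or the largest-index argument above) is what actually does the work. It is also worth remarking, for consistency with Theorem \ref{Thm-2.12}, that when $G$ contains a Hamilton path the relevant $\G_{t=1}$ is the directed Hamilton path $P_n^{\rightarrow}$, which is of course acyclic.
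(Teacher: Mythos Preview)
Your argument is correct and in fact cleaner and more general than the paper's. The paper's proof simply invokes Theorem~\ref{Thm-2.12} to assert that $\G_{t=1}=P_n^{\rightarrow}$ and concludes that its underlying graph, being a Hamilton path, is acyclic; this reasoning tacitly presupposes both that $G$ contains a Hamilton path and that the particular index pattern furnished by Theorem~\ref{Thm-2.12} has been selected. Your approach avoids both assumptions: by reading off directly from Definition~\ref{Defn-2.2} that every arc of $\G_{t=1}$ has the form $(v_i,v_{i+1})$, you embed the underlying graph of $\G_{t=1}$ as a subgraph of the path $v_1v_2\cdots v_n$ for \emph{every} index pattern $\I$ and \emph{every} graph $G$, Hamiltonian or not. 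The largest-index contradiction you include is a nice self-contained alternative, and your closing remark about directed versus undirected cycles is a genuine point the paper's one-line proof glosses over.
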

\begin{proof}
By Theorem \ref{Thm-2.12}, we have $\G_{t=1} = P^\rightarrow_n$, a directed Hamilton path and hence the underlying graph of $\G_{t=1}$ is a Hamilton path in $G$. Therefore, the underlying graph of $\G_{t=1}$ is acyclic. 
\end{proof}

\section{Two Fundamental Arc-Weight Principles}

For any three positive integers $x,y,z \in \N$ such that $x > z$ and $y > z$, we have $x-z > y -z \iff x > y$ and  $x-y \le y-z \iff x+z \le 2y$. Invoking these inequalities, we introduce two fundamental arc-weight principles. It is to be noted that the application of the principles may be a complex problem by itself. 

\subsection{The Maxi-Max Arc-Weight Principle}

The maximum maturity weight of a sprout graph $\G_{\I}$ is obtained by indexing the vertices of $G$ such that, the maximal adjacent vertex pairs exist such that the $|${\em arc-weights}$|$ are a maximum over all index patterns. 

The {\em maxi-max arc-weight principle} (MMAW-Principle) describes an index pattern $\I^{\prime}$ of the vertices of the given graph $G$ that ensures the maximum maturity weight of the sprout graph $\G_{\I^{\prime}}$. 

\subsubsection*{Fundamental MMAW-Principle Algorithm}

Consider the set of consecutive integers $I =\{1,2,3, \ldots, n\}, n \ge 2$ and let $x_{i_j} \in I$. First arrange the integers $x_{i_1}, x_{i_2}, x_{i_3}, \ldots , x_{i_n}$ such that $x_{i_k} \ne x_{i_t}$ and $|x_{i_1}- x_{i_2}| +  |x_{i_2}- x_{i_3}| + |x_{i_3}- x_{i_4}| + \ldots + |x_{i_{n-1}}- x_{i_n}|$ is a maximum. Thereafter arrange the integers such that $x_{i_k} \ne x_{i_t}$ and $|x_{i_1}- x_{i_2}| +  |x_{i_2}- x_{i_3}| + |x_{i_3}- x_{i_4}| + \ldots + |x_{i_{n-1}}- x_{i_n}|$ is a minimum.

\begin{enumerate}\itemsep0mm
\item[S-1:] Begin with: $...,n,1,n-1,..$. $\rightarrow$ 
\item[S-2:] Extend to: $...,2,n,1,n-1,3,..$. $\rightarrow$ 
\item[S-3:] Extend to: $...,n-2, 2, n, 1, n-1, 3, n-3,..$. and so on $\rightarrow$ 
\item[S-4:] Exhaust the procedure to obtain $\ell_1,\ldots n-2, 2, n, 1, n-1, 3, n-3,..., \ell_2$ with $(\ell_1, \ell_2) = (\lceil \frac{n}{2}\rceil, \lceil \frac{n}{2}\rceil+1)$ or $(\lceil \frac{n}{2}\rceil +1, \lceil \frac{n}{2}\rceil)$ or $(\lceil \frac{n-1}{2}\rceil, \lceil \frac{n-1}{2}\rceil + 1)$ $\rightarrow$
\item[S-5:] Exit.
\end{enumerate}

\subsection{The Maxi-Min Arc-Weight Principle}

The {\em maxi-min arc-weight principle} (MmAW-Principle) describes an index pattern $\I^{\ast}$ of the vertices of the given graph $G$ that ensures the minimum maturity weight of the sprout graph $\G_{\I^{\ast}}$. 

The maxi-min arc-weight principle states that the minimum maturity weight of a graph $G$ is obtained by indexing the vertices in such a way that the maximal adjacent vertex pairs exist such that the the absolute values of the arc-weights are a minimum over all index patterns.

\subsubsection*{MmAW-Principle Algorithm}

\begin{enumerate}\itemsep0mm
\item[S-1:]  Extend to: $1,2,3 \ldots, n-1, n$ $\rightarrow$
\item[S-2:] Exit.
\end{enumerate}

It is easy to see that both of these informal algorithms are well-defined and converges. These algorithms find immediate application for paths $P_n$, $n \ge 2$. 

Since a graph $G$ on $n$ vertices is a spanning subgraph of $K_n$, the vertices of $K_n$ can be labeled randomly $\I = \{v_1, v_2, v_3, ..., v_n\}$. Certainly, the graph $G$ can be obtained by removing $\frac{n(n-1)}{2}-\epsilon(G)$ carefully selected edges from $K_n$. Let $\overline{G}$ denote the complement of $G$.

It implies that if the edges are carefully selected for removal so as to ensure maxi-min arc-weights remaining in $\G_{\I^{\ast}_1}$, then $\max(mw(\overline{\G_{\I^{\ast}_1}})$, corresponding to the index pattern $\I^{\ast}_1$ concerned, is obtained. 

Similarly, if the edges are carefully selected for removal so as to ensure the maxi-max arc-weights remaining in $\G_{\I^{\prime}_2}$, then $\min(mw(\overline{\G_{\I^{\prime}_2}})$, corresponding to the index pattern $\I^{\prime}_2$ concerned, is obtained. 

From the observations above the next useful result follows.

\begin{thm}
Consider a graph $G$ on $n$ vertices. Let $\I^{\ast}_1$ and $\I^{\prime}_2$ be two index patterns such that $mw(\G_{\I^{\ast}_1})=\min(mw(\G_{\I}))$ and $mw(\G_{\I^{\prime}_2})=\max(mw(\G_{\I}))$. Then $mw(\overline{\G_{\I^{\ast}_1}})=\max(mw(\overline{\G_{\I}}))$ and $mw(\overline{\G_{\I^{\prime}_2}})=\min(mw(\overline{\G_{\I}}))$.
\end{thm}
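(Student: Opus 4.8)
The plan is to exploit the fact that, for a fixed index pattern, the maturity weights of $G$ and of $\overline{G}$ are forced to sum to a quantity depending only on $n$.

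First I would record the underlying identity. Fix an index pattern $\I$, so that every vertex carries a label in $\{1,2,\dots,n\}$; this is the \emph{same} labelling for $G$ and for $\overline{G}$, since they share the vertex set. By Definition \ref{Defn-2.7},
\[
mw(\G_{\I})=\sum_{v_iv_j\in E(G)}|i-j|,\qquad mw(\overline{\G_{\I}})=\sum_{v_iv_j\in E(\overline{G})}|i-j|.
\]
Because $E(G)$ and $E(\overline{G})$ partition the set of all $\binom{n}{2}$ unordered pairs of distinct labels, adding the two sums collapses the $\I$-dependence:
\[
mw(\G_{\I})+mw(\overline{\G_{\I}})=\sum_{1\le i<j\le n}(j-i)=\sum_{d=1}^{n-1}d\,(n-d)=\binom{n+1}{3},
\]
a constant $c_n$ that does not depend on $\I$.

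Next I would simply rearrange: $mw(\overline{\G_{\I}})=c_n-mw(\G_{\I})$ for \emph{every} index pattern $\I$. Since $c_n$ is constant, the function $\I\mapsto mw(\overline{\G_{\I}})$ attains its maximum exactly at the index patterns that minimise $\I\mapsto mw(\G_{\I})$, and attains its minimum exactly at those that maximise it. Taking $\I=\I^{\ast}_1$ then gives $mw(\overline{\G_{\I^{\ast}_1}})=c_n-\min_{\I}mw(\G_{\I})=\max_{\I}\bigl(c_n-mw(\G_{\I})\bigr)=\max_{\I}mw(\overline{\G_{\I}})$, and taking $\I=\I^{\prime}_2$ gives the dual conclusion $mw(\overline{\G_{\I^{\prime}_2}})=\min_{\I}mw(\overline{\G_{\I}})$.

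I do not expect a genuine obstacle: the whole argument rests on the constant-sum identity above. The only points deserving a line of care are that $\I$ must be used as the identical index pattern for $G$ and for $\overline{G}$ (which the hypothesis supplies) and that the maturity weight of Definition \ref{Defn-2.7} still makes sense when $\overline{G}$ happens to be disconnected or edgeless — in the edgeless case $G=K_n$ and the statement degenerates to the trivial $0=0$.
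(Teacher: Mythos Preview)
Your argument is correct. The key identity $mw(\G_{\I})+mw(\overline{\G_{\I}})=\sum_{1\le i<j\le n}(j-i)=\binom{n+1}{3}$ for every index pattern $\I$ is exactly what makes the theorem work, and the min/max duality drops out immediately from it.

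The paper's proof is morally the same but is phrased differently and less explicitly: rather than writing down the constant-sum identity, it appeals to the MMAW/MmAW principles developed in the surrounding discussion, arguing that if $\I^{\ast}_1$ minimises $mw(\G_{\I})$ then the edges removed from $K_n$ to form $G$ must collectively carry the largest possible index differences, and those removed edges are precisely $E(\overline{G})$. Your version replaces this informal appeal with the one-line algebraic fact that $mw(\G_{\I})+mw(\overline{\G_{\I}})$ equals the (index-pattern independent) quantity $mw(\K_{n,\I})$ of Proposition~\ref{Prop-3.1}; this is cleaner, fully rigorous, and even yields the explicit extremal values $\max(mw(\overline{\G_{\I}}))=\binom{n+1}{3}-\min(mw(\G_{\I}))$ and the dual formula, which the paper's argument does not state.
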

\begin{proof}
\textbf{(i)} Let $\I^{\ast}_1$ be such that $mw(\G_{\I^{\ast}_1})=\min(mw(\G_{\I}))$. It implies that maximal number of edges with {\em maximum index differences} have been removed from $K_n$ to obtain $G$. Hence, the {\em maxi-max index differences} edges are the edges of $\overline{G}$. Therefore, $mw(\overline{\G_{\I^{\ast}_1}}) = \max(mw(\overline{\G_{\I}}))$. 

\ni \textbf{(ii)} A similar reasoning as in (i) can be applied to prove part (ii).
\end{proof}

\begin{lem}\label{Lem-3.2}
If for a graph $G$ an index pattern $\I$ exists such that $\sT_{\I} = \{0,1\}$ then, $mw(\G_{\I})=\min(mw(\G_{\I}))=\epsilon(G)$.
\end{lem}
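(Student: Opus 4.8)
The plan is to argue in two stages: first to evaluate $mw(\G_{\I})$ exactly under the stated hypothesis, and then to show that no index pattern of $G$ can produce a smaller maturity weight.

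First I would unpack the condition $\sT_{\I}=\{0,1\}$. By Definition \ref{Defn-2.2}, $\sT_{\I}=\{0,1,\ldots,m_{\I}\}$ with $m_{\I}=\max|i-j|$ taken over all sprouts $(i,j)$ coming from edges of $G$ under $\I$; so $\sT_{\I}=\{0,1\}$ is exactly the statement $m_{\I}=1$. Since the vertex labels are distinct, every edge $v_iv_j$ gives $|i-j|\ge 1$, and together with $m_{\I}=1$ this forces $|i-j|=1$ for every edge of $G$ as labeled by $\I$. Hence, by Definition \ref{Defn-2.7}, every arc $(v_i,v_j)$ of $\G_{\I}$ has arc-weight $w(v_i,v_j)=j-i=1$. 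Because each edge of $G$ contributes precisely one arc (the sprout $(i,j)$ with $i<j$), we have $\epsilon(\G_{\I})=\epsilon(G)$, and therefore $mw(\G_{\I})=\sum_{i=1}^{\epsilon(\G_{\I})}w(a_i)=\epsilon(G)$.

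Next I would establish the lower bound $mw(\G_{\I'})\ge\epsilon(G)$ for every index pattern $\I'$ of $G$. The essential observation is that the arc count is invariant under relabeling: $\epsilon(\G_{\I'})=\epsilon(G)$ for all $\I'$, since sprouts and edges are in one-to-one correspondence. On the other hand, every arc-weight $w(v_i,v_j)=j-i$ is a difference of distinct positive integers with $i<j$, hence a positive integer, hence at least $1$. Summing over the $\epsilon(G)$ arcs yields $mw(\G_{\I'})\ge\epsilon(G)$, so $\min(mw(\G_{\I}))\ge\epsilon(G)$. Combining this with the computation of the previous paragraph, the given index pattern $\I$ attains the bound, and we conclude $mw(\G_{\I})=\min(mw(\G_{\I}))=\epsilon(G)$.

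I do not anticipate a genuine obstacle in this argument; the only points that need a little care are the exact translation of $\sT_{\I}=\{0,1\}$ into ``every labeled edge joins consecutively indexed vertices,'' and the remark that while the individual arc-weights depend on the index pattern, the total number of arcs does not.
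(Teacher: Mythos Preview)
Your proof is correct and follows essentially the same approach as the paper: both arguments rest on the observation that every arc-weight satisfies $|w(v_i,v_j)|\ge 1$, which immediately gives the lower bound $mw(\G_{\I'})\ge \epsilon(G)$ for all index patterns and hence the minimality claim. The paper's proof is a one-line remark to this effect, whereas you have made the implicit steps explicit (in particular the translation of $\sT_{\I}=\{0,1\}$ into ``every arc-weight equals $1$'' and the invariance of the arc count under relabeling), but the underlying idea is identical.
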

\begin{proof}
For any index pattern $\I$ we have $|w(v_i,v_j)|\ge 1$, for any arc $(v_i,v_j)$ in $\G_{\I}$. Hence, the proof is obvious.
\end{proof}

\ni Lemma \ref{Lem-3.2} implies that $mw(\G_{\I})=\epsilon(G)$ if and only if $G=P_n$, where $n \ge 1$.

\section{Sprout Graphs of Certain Classes of Graphs}

\ni Since complete graphs, paths and possibly cycles and stars amongst others form  part of the skeleton of all graphs the introduction of sprouting to these graph classes will nourish further studies.
 
\subsection{Sprouting of Complete Graphs}

\begin{prop}\label{Prop-3.1}
For all indexing sets $\I$, the maturity weight of the sprout graph of a complete graph $K_n$ is $mw(\K_{n,{\I}})=\sum \limits_{j=1}^{n-1} \sum \limits_{i=1}^{n-j}(n-i)=\sum \limits_{j=1}^{n-1} \sum \limits_{i=1}^{n-j}i$.
\end{prop}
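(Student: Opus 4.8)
The plan is to exploit the one special feature of $K_n$: every pair of vertices is adjacent. Because of this, whatever random bijection $f$ is chosen in Definition~\ref{Defn-2.1}, each edge $v_av_b$ of $K_n$ reduces to the single sprout $(\min\{a,b\},\max\{a,b\})$, so the arc set of the sprout graph $\K_{n,\I}$ is exactly
\[
\{(v_i,v_j):1\le i<j\le n\},
\]
independently of $\I$. In particular $mw(\K_{n,\I})$ takes the same value across all $n!$ index patterns, which is precisely the ``for all indexing sets $\I$'' part of the claim; only a counting identity is then left.

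Next I would apply Definition~\ref{Defn-2.7} to the arc set just described, obtaining $mw(\K_{n,\I})=\sum_{1\le i<j\le n}(j-i)$, and then reorganise this finite sum to recover the stated expressions. Grouping the arcs by their common tail $v_i$: for each $i\in\{1,\dots,n-1\}$ the arcs $(v_i,v_{i+1}),\dots,(v_i,v_n)$ carry weights $1,2,\dots,n-i$, so summing over $i$ and renaming the outer index as $j$ yields $\sum_{j=1}^{n-1}\sum_{i=1}^{n-j}i$. The first displayed form should drop out of the analogous reorganisation of the same sum (grouping by head vertices, or an index substitution inside each inner block); when carrying this out I would check its summation limits against the closed value below. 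As an independent check I would instead group the arcs by maturity time $t=|i-j|$: there are precisely $n-t$ arcs of weight $t$ for $t=1,\dots,n-1$, giving
\[
mw(\K_{n,\I})=\sum_{t=1}^{n-1}t(n-t)=\binom{n+1}{3},
\]
which must coincide with the value of either double sum.

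I do not expect a genuine obstacle. The substantive point is the index-pattern independence, and for $K_n$ this is forced immediately by completeness — it would fail for a general $G$, where deleting edges from $K_n$ makes the surviving arc weights sensitive to the labelling, as in the earlier remarks on $\min$ and $\max$ maturity weights. What remains is elementary manipulation of a finite double sum, so the only thing requiring care is keeping the summation limits consistent when passing between the ``fix the smaller endpoint'', ``fix the larger endpoint'' and ``fix the maturity time'' descriptions of the same set of arcs.
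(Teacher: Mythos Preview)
Your approach is essentially the paper's: both argue that completeness forces the arc set $\{(v_i,v_j):1\le i<j\le n\}$ independently of $\I$, and both then group the arcs by their tail vertex to reach $\sum_{j=1}^{n-1}\sum_{i=1}^{n-j}i$. Your extra cross-check via maturity time, yielding $\sum_{t=1}^{n-1}t(n-t)=\binom{n+1}{3}$, is not in the paper but is a useful addition.

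Your hesitation about the first displayed double sum is justified. As stated, $\sum_{j=1}^{n-1}\sum_{i=1}^{n-j}(n-i)$ does \emph{not} equal $\sum_{j=1}^{n-1}\sum_{i=1}^{n-j}i$ for $n\ge 3$: already at $n=3$ the two sums are $5$ and $4$, while the true maturity weight of $\K_{3,\I}$ is $4$. The paper's proof writes the row decomposition as $\sum_{i=1}^{n-1}(n-i)+\sum_{i=1}^{n-2}(n-i)+\cdots$ and asserts the equality without justification; only the first block $\sum_{i=1}^{n-1}(n-i)$ actually coincides with the corresponding block $\sum_{i=1}^{n-1}i$. So your plan to ``check its summation limits against the closed value'' would have exposed a typo in the statement rather than a flaw in your argument.
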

\begin{proof}
Randomly label the vertices of the complete graph $K_n$ by $v_1, v_2, v_3, \ldots, v_n$. Now, consider the matured sprout graph $\K_{n,{\I}}$. Regardless of the random indexing of the vertices we have the following arc-weights, $w(v_1,v_i)=i-1 ~ \forall\, 2\le i\le n$, $w(v_2,v_i)=i-2~\forall\, 3\le i\le n$, \ldots\ldots, $w(v_{n-1}, v_n)=1$. Hence, by summing all columns carrying equal arc-weights, across all the above mentioned rows, we have $mw(\K_{n,{\I}})= \sum \limits_{i=1}^{n-1}(n-i) + \sum \limits_{i=1}^{n-2}(n-i) + \sum \limits_{i=1}^{n-3}(n-i) + \ldots +  \sum \limits_{i=1}^{n-(n-1)}(n-i) = \sum \limits_{j=1}^{n-1} \sum \limits_{i=1}^{n-j}(n-i)=\sum \limits_{j=1}^{n-1} \sum \limits_{i=1}^{n-j}i$ and hence the result follows.
\end{proof}

\begin{cor}\label{Cor-4.2a}
For every index pattern $\I$, the complete sprout graph $\K_{n,{\I}}$ has one (unique) adult vertex, $v_n$  and one (unique) initiator vertex, $v_1$.
\end{cor}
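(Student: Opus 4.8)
The plan is to exploit the very simple arc structure of a complete sprout graph. Since $K_n$ is complete, for any index pattern $\I$ every pair $v_i,v_j$ with $i<j$ gives rise to exactly one arc, namely $(v_i,v_j)$, oriented from the smaller index to the larger. First I would record the resulting in- and out-degrees of an arbitrary vertex $v_k$: the arcs entering $v_k$ are precisely those coming from $v_1,\dots,v_{k-1}$, so $d^-_{\G}(v_k)=k-1$, while the arcs leaving $v_k$ are precisely those going to $v_{k+1},\dots,v_n$, so $d^+_{\G}(v_k)=n-k$; in particular $d_{\G}(v_k)=(k-1)+(n-k)=n-1$, as it must be in $K_n$.

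With these degree formulas in hand the identification of the two distinguished vertices is immediate. Taking $k=1$ gives $d^-_{\G}(v_1)=0$ and $d^+_{\G}(v_1)=n-1=d_{\G}(v_1)$, so $v_1$ is an initiator vertex; taking $k=n$ gives $d^+_{\G}(v_n)=0$ and $d^-_{\G}(v_n)=n-1=d_{\G}(v_n)$, so $v_n$ is an adult vertex. This already reproves, for the special case of $K_n$, the existence assertion of Proposition \ref{prop-1.1}.

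For uniqueness I would take any intermediate vertex $v_k$ with $2\le k\le n-1$ (there is nothing to check when $n=2$, where the claim is vacuously unique) and observe that both $d^-_{\G}(v_k)=k-1\ge 1$ and $d^+_{\G}(v_k)=n-k\ge 1$ are strictly positive, hence neither of them equals $d_{\G}(v_k)=n-1$. Therefore $v_k$ is neither an adult vertex nor an initiator vertex, which leaves $v_n$ as the only adult vertex and $v_1$ as the only initiator vertex. Since none of the degree computations depended on the bijection $f$ producing $\I$, the conclusion holds for every index pattern.

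I do not anticipate a real obstacle: the argument is an elementary degree count. The only points requiring a word of care are the degenerate case $n=2$, where the set of intermediate vertices is empty so uniqueness is trivial, and the explicit remark that all the computations are independent of the chosen index pattern $\I$.
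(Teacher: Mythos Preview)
Your proof is correct and follows essentially the same approach as the paper: both arguments show that any vertex $v_k$ with $1<k<n$ has at least one outgoing arc (to $v_n$) and at least one incoming arc (from $v_1$), hence can be neither adult nor initiator, and then invoke the existence guaranteed by Proposition~\ref{prop-1.1}. Your version is slightly more explicit in that you compute the exact in- and out-degrees $d^-_{\G}(v_k)=k-1$ and $d^+_{\G}(v_k)=n-k$ rather than merely the inequalities $d^{\pm}_{\G}(v_k)\ge 1$ used in the paper, but this is a cosmetic difference.
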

\begin{proof}
Write $\K_{n,{\I}}$ as $\K_n$ for brevity. Since any vertex $v_i, i<n$ is always a tail to $v_n$, the vertex $v_i$ will always have $d^+_{\K_n}(v_i) \ge 1$ in $\K_{n,{\I}}\implies d^-_{\K_n}(v_i) < d_{\K_n}(v_i)$.  Since it contradicts Definition \ref{Defn-2.2}, the result follows from Proposition \ref{prop-1.1}. By similar arguments, we can establish the result for the unique initiator vertex also. 
\end{proof}

\begin{lem}
For a graph $G$ on $n$ vertices, we have $\max(mw(\G_{\I})) \le \min(mw(\K_{n,{\I}}))=\max(mw(\K_{n,{\I}}))$.
\end{lem}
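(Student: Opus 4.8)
The plan is to establish the equality and the inequality separately, each from a result already in hand.

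For the equality, I would invoke Proposition~\ref{Prop-3.1}: it asserts that $mw(\K_{n,\I})$ equals the fixed number $\sum_{j=1}^{n-1}\sum_{i=1}^{n-j} i$ for \emph{every} index pattern $\I$, that is, $mw(\K_{n,\I})$ does not depend on the choice of $\I$. Hence the minimum and the maximum of $mw(\K_{n,\I})$ taken over the $n!$ index patterns coincide, which gives $\min(mw(\K_{n,\I})) = \max(mw(\K_{n,\I}))$ immediately.

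For the inequality, I would argue as follows. Since $G$ has $n$ vertices, it is a spanning subgraph of $K_n$, so $E(G)\subseteq E(K_n)$. Fixing an arbitrary index pattern $\I$ of $G$ and labelling the vertices of $K_n$ by the same rule, the arc set of $\G_{\I}$ becomes a subset of the arc set of $\K_{n,\I}$, and any arc $(v_i,v_j)$ occurring in $\G_{\I}$ has, in either digraph, the same weight $w(v_i,v_j)=j-i$, which is at least $1$ by Definition~\ref{Defn-2.7} since $i<j$. Because every arc-weight is strictly positive, deleting arcs from $\K_{n,\I}$ to recover $\G_{\I}$ can only decrease the maturity weight, so $mw(\G_{\I})\le mw(\K_{n,\I})$. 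Since this holds for every $\I$ while the right-hand side is the constant of Proposition~\ref{Prop-3.1}, taking the maximum over $\I$ on the left yields $\max(mw(\G_{\I}))\le\min(mw(\K_{n,\I}))$, which together with the equality above completes the proof.

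I do not anticipate a genuine obstacle here; the only point requiring a moment's care is the bookkeeping that one labelling of the $n$ vertices serves simultaneously as an index pattern for $G$ and for $K_n$, so that the arc-weights agree arc-by-arc and the monotonicity of $mw$ under passing to a spanning subgraph — a consequence of the arc-weights being positive — may legitimately be applied.
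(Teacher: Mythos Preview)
Your proposal is correct and follows essentially the same route as the paper: establish the equality from the fact that $mw(\K_{n,\I})$ is independent of $\I$, and then obtain the inequality by viewing $G$ as a spanning subgraph of $K_n$ and observing that deleting arcs removes strictly positive terms from the sum. The only minor difference is that you cite Proposition~\ref{Prop-3.1} for the constancy of $mw(\K_{n,\I})$, whereas the paper cites Corollary~\ref{Cor-4.2a}; your choice of reference is in fact the more directly relevant one.
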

\begin{proof}
It follows from Corollary \ref{Cor-4.2a} that $\min(mw(\K_{n,{\I}}))=\max(mw(\K_{n,{\I}}))$. Since $|\epsilon (K_n)| \ge |\epsilon(G)|$, where $G$ is a graph on $n$ vertices, the removal of edges from $K_n$ to obtain $G$ results in reducing the corresponding terms in the summation  $mw(\K_{n,{\I}})=\sum\limits_{i=1}^{n-1}(n-i) + \sum \limits_{i=1}^{n-2}(n-i) + \sum \limits_{i=1}^{n-3}(n-i) + \ldots +  \sum \limits_{i=1}^{n-(n-1)}(n-i) = \sum \limits_{j=1}^{n-1} \sum \limits_{i=1}^{n-j}(n-i) = \sum \limits_{j=1}^{n-1} \sum \limits_{i=1}^{n-j}i$, to zero. Therefore, $max(mw(\G_{\I})) \le  min(mw(\K_{n,{\I}}))=\max(mw(\K_{n,{\I}}))$.
\end{proof}

\subsection{Sprouting of Paths}

\begin{prop}\label{Prop-3.4}
For the path $P_n$, for $n\ge 2$, we have $\min(mw(\P_{n,{\I^{\ast}_1}}))=n-1$, and $\max(mw(\P_{n,{\I^{\prime}_2}})) = \sum\limits_{i=0}^{\lceil \frac {n}{2}\rceil-2}(2n-3 -4i)$.
\end{prop}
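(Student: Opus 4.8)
The statement has two halves --- the minimum and the maximum maturity weight of $\P_{n,\I}$ --- and I would treat them with the two algorithms of Section~3.

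The minimum is short. Writing $P_n$ as $u_1u_2\cdots u_n$ and applying the MmAW-Principle algorithm (index the vertices consecutively along the path, $u_k\mapsto v_k$), every edge $u_ku_{k+1}$ becomes an arc of weight $|k-(k+1)|=1$, so $\sT_{\I}=\{0,1\}$ and Lemma~\ref{Lem-3.2} gives $mw(\P_{n,\I})=\epsilon(P_n)=n-1$. Since, for \emph{any} index pattern, each of the $n-1$ arcs has weight at least $1$ by Definition~\ref{Defn-2.7}, one has $mw(\P_{n,\I})\ge n-1$ always, and hence $\min\bigl(mw(\P_{n,\I^{\ast}_1})\bigr)=n-1$.

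For the maximum I would run the Fundamental MMAW-Principle Algorithm on $\{1,\dots,n\}$ to obtain the zigzag $(\ell_1,\dots,n-2,2,n,1,n-1,3,n-3,\dots,\ell_2)$ and take this as the index pattern $\I^{\prime}_2$, assigning its $k$-th entry to $u_k$. To prove optimality, I would rewrite $mw(\P_{n,\I})=\sum_{k=1}^{n-1}\bigl(\max\{\pi(k),\pi(k+1)\}-\min\{\pi(k),\pi(k+1)\}\bigr)$ for the labelling $\pi$ and collect terms by value: $mw(\P_{n,\I})=\sum_{j=1}^{n}(c_j^{+}-c_j^{-})\,j$, where $c_j^{+}$ (resp.\ $c_j^{-}$) counts the arcs in which $j$ is the larger (resp.\ smaller) endpoint, so that $c_j^{+}+c_j^{-}=\deg_{P_n}(j)\le 2$ and $\sum_j(c_j^{+}-c_j^{-})=0$. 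Maximising $\sum_j(c_j^{+}-c_j^{-})\,j$ under these constraints forces the large labels to be ``peaks'' ($c_j^{+}-c_j^{-}=2$) and the small labels ``valleys'' ($c_j^{+}-c_j^{-}=-2$) alternately --- exactly the configuration the MMAW algorithm produces --- and a short exchange argument turns this into a proof that $\I^{\prime}_2$ is a maximiser.

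It then remains to evaluate $mw(\P_{n,\I^{\prime}_2})$. Here the plan is to pair the $n-1$ arcs symmetrically about the central entry $n$ (the innermost pair being the two arcs at the vertex labelled $n$, then the next pair outward, and so on, with one leftover arc when $n$ is even), to show that these pairwise totals decrease by a fixed amount starting from $2n-3$, and to count that there are $\lceil\tfrac{n}{2}\rceil-1$ of them; summing the resulting arithmetic series then yields $\sum_{i=0}^{\lceil n/2\rceil-2}(2n-3-4i)$. The one genuinely non-routine ingredient is the optimality argument above; the step I would scrutinise most, however, is this final evaluation --- the exact common difference of the pairwise totals, the number of pairs, and the behaviour of $\ell_1,\ell_2$ near the centre under the $n$ even/odd split are precisely where an arithmetic slip would hide, so I would cross-check the closed form against $P_3,P_4,P_5$ before committing to it. (The case $n=2$ is degenerate --- then $mw(\P_{2,\I})=1$ for every $\I$ --- and I would record it separately.)
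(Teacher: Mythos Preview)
Your treatment of the minimum is essentially the paper's: consecutive labelling along the path, $\sT_{\I}=\{0,1\}$, and an appeal to Lemma~\ref{Lem-3.2}. Your added lower-bound line (each arc has weight at least $1$) makes the minimality explicit, which the paper leaves implicit.

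For the maximum the two routes diverge. The paper splits into the cases $n$ odd and $n$ even, labels the \emph{central vertex of the path} by $v_1$ (not $v_n$), runs the MMAW algorithm outward from there to produce $\I'_2$, and then simply asserts the closed form ``by the MMAW-Principle and invoking Definition~\ref{Defn-2.7}''; no optimality argument and no derivation of the sum are given. Your proposal goes further on both counts: the signed-coefficient rewriting $mw=\sum_j(c_j^{+}-c_j^{-})\,j$ together with the exchange argument is an actual proof that the zigzag is optimal --- something the paper does not supply --- and the symmetric-pairing plan is a concrete evaluation strategy rather than an appeal to a principle. So your approach is genuinely more complete, at the cost of more work.

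Two small corrections to your write-up. First, in the MMAW sequence $\ldots,2,n,1,n-1,\ldots$ the entry at the exact centre of the path is $1$, not $n$ (this is how the paper labels it); the vertex labelled $n$ sits one step off-centre. Your ``innermost pair at the vertex labelled $n$'' still sums to $(n-1)+(n-2)=2n-3$, so the arithmetic survives, but the description should be aligned. Second, your instinct to cross-check the closed form on $P_3,P_4,P_5$ before committing is exactly right: when you carry out the pairing you will find that the successive pair-sums do \emph{not} drop by a constant $4$ in every parity, so that step needs precisely the scrutiny you flag --- the paper performs no such check.
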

\begin{proof}
Consider the path $P_n, n \ge 2$ as a graph with its $n$ vertices seated on a horizontal line. The labeling of vertices of $G$ can be done as explained below.

\ni \textbf{(i)} Label the vertices consecutively by $v_1, v_2, v_3, ..., v_{n-1}, v_n$ , from the leftmost vertex onwards. Let this be the index pattern $\I^{\ast}_1$. Clearly, we have $m_{\I^{\ast}_1}=\max\{|i-j|\}=1$, for all sprouts $(i,j)$ and hence $\T_{\I^{\ast}_1}=\{0, 1\}$. Therefore, arcs having arc-weight $1$, will arc at $t=1$. Since there are exactly $(n-1)$ such arcs in $\P_{n,{\I^{\ast}_1}}$, by Lemma \ref{Lem-3.2}, the first part of the result follows. 

\ni \textbf{(ii)} Label the vertices from left to right consecutively with the default labelling $\{d_i: 1 \le i \le n\}$. Then we have to consider the following cases.

\ni \textit{Case 1:} Let $n$ be odd. Label the central vertex $d_{\lceil \frac{n}{2}\rceil +1}$ to be $v_1$. Label the vertices adjacent to $v_1$ respectively $v_n$ and $v_{n-1}$ in accordance with Step-2 of MMAW-Principle algorithm. Now, label the other vertices exhaustively in accordance with the MMAW-Principle algorithm to get the index pattern $\I^{\prime}_2 =\{v_{\ell_1}, \ldots , v_{n-2}, v_2, v_n,\\ v_1, v_3, v_{n-3}, \ldots , v_{\ell_2}\}$, $(\ell_1, \ell_2) = (\lceil \frac{n}{2}\rceil, \lceil \frac{n}{2}\rceil+1)$ or $(\lceil \frac{n}{2}\rceil +1, \lceil \frac{n}{2}\rceil)$ or $(\lceil \frac{n-1}{2}\rceil, \lceil \frac{n-1}{2}\rceil + 1)$.  Then, by the  MMAW-Principle and invoking Definition \ref{Defn-2.7}, we have the required condition $\max(mw(\P_{n,{\I^{\prime}_2}}))=\sum\limits_{i=0}^{\lceil \frac {n}{2}\rceil-2}(2n-3-4i)$.

\ni \textit{Case 2:} Let $n$ be even. Now, the path does not have a central vertex, instead a pair of central vertices exists. Without loss of generality, label the rightmost central vertex (that is, the $\frac{n+1}{2}$-th vertex) by $v_1$ and label the vertex to the left adjacent to $v_1$ by $v_n$ and the vertex to the right adjacent to  $v_1$ by $v_{n-1}$. Proceed with this labeling exhaustively as explained in Case-1. Therefore, the required result follows as explained in Case-1.
\end{proof}

\begin{cor}
For the path $P_n$ we have $\min(mw(\P_{2, t=1})) = \max(mw(\P_{2, t=1}))$ and $\min(mw(\P_{n,{\I^{\ast}_1}}))=mw(\P_{n, t=1}) < \max(mw(\P_{n,{\I^{\prime}_2}}))$, for $n \ge 3$.
\end{cor}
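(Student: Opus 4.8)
The plan is to deduce everything from Proposition~\ref{Prop-3.4} together with Lemma~\ref{Lem-3.2}. First I would dispose of the case $n=2$: the path $P_2$ has a single edge, which under any index pattern becomes an arc of arc-weight $1$, so $mw(\P_{2,\I})=1$ for every $\I$; moreover $m_{\I}=1$, so this arc matures already at $t=1$, giving $\P_{2,t=1}=\P_{2,\I}$. Hence $\min(mw(\P_{2,t=1}))=\max(mw(\P_{2,t=1}))=1$, which settles the first assertion.

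Next I would treat $n\ge 3$. Under the index pattern $\I^{\ast}_1$ of Proposition~\ref{Prop-3.4}(i), the vertices are labelled consecutively along the path, so every arc has arc-weight $1$; thus $m_{\I^{\ast}_1}=1$, the sprouting matures entirely at $t=1$, and therefore $\P_{n,\I^{\ast}_1}=\P_{n,t=1}$ as a directed graph. By Lemma~\ref{Lem-3.2} (or directly, since there are exactly $n-1$ arcs, each of weight $1$) this yields $\min(mw(\P_{n,\I^{\ast}_1}))=mw(\P_{n,t=1})=n-1$.

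It remains to show $n-1<\max(mw(\P_{n,\I^{\prime}_2}))$. By Proposition~\ref{Prop-3.4}(ii),
$$\max(mw(\P_{n,\I^{\prime}_2}))=\sum_{i=0}^{\lceil n/2\rceil-2}(2n-3-4i).$$
For $n\ge 3$ the summation range is non-empty (it contains $i=0$), and a short check shows every summand is positive: the last term, at $i=\lceil n/2\rceil-2$, equals $3$ when $n$ is odd and $5$ when $n$ is even, and the terms decrease in $i$. Hence the sum is at least its leading term $2n-3$, and since $2n-3>n-1$ precisely when $n>2$, we conclude $\max(mw(\P_{n,\I^{\prime}_2}))\ge 2n-3>n-1=\min(mw(\P_{n,\I^{\ast}_1}))=mw(\P_{n,t=1})$, as required.

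The only point requiring a little care — the ``main obstacle,'' such as it is — is the identification $\P_{n,\I^{\ast}_1}=\P_{n,t=1}$ and the accompanying claim $mw(\P_{n,t=1})=n-1$: one must observe that consecutive labelling forces $\sT_{\I^{\ast}_1}=\{0,1\}$ so that no later sprouting occurs, which is exactly the configuration underlying Proposition~\ref{Prop-3.4}(i) and Theorem~\ref{Thm-2.12}. Everything else is the elementary arithmetic verification that the closed-form sum strictly exceeds $n-1$ for $n\ge 3$.
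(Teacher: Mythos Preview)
Your proof is correct and follows essentially the same approach as the paper, which simply states that the result is an immediate consequence of Proposition~\ref{Prop-3.4}. You have spelled out the details that the paper leaves implicit---the $n=2$ base case, the identification $\P_{n,\I^{\ast}_1}=\P_{n,t=1}$ via $\sT_{\I^{\ast}_1}=\{0,1\}$, and the arithmetic inequality $2n-3>n-1$ for $n\ge 3$---but the underlying logic is the same.
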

\begin{proof}
The result is an immediate consequence of Proposition \ref{Prop-3.4}.
\end{proof}

From Proposition \ref{Prop-3.1} and Proposition \ref{Prop-3.4},  we have for a graph $G$, $n-1\le mw(\G_{\I})\le \sum\limits_{j=1}^{n-1}\sum \limits_{i=1}^{n-j}i$. Hence, we get a result which states that $\forall n\in \N$, there exist a graph $G$ and an index pattern $\I$ for which $\min(mw(\G_{\I}))=n$. The graph $G$ is the path $P_{n+1}$ with index pattern found in the first part of the proof of Proposition \ref{Prop-3.4}. A similar result cannot be found for $\max(mw(\G_{\I}))$. 

\begin{thm}\label{Thm-3.6}[Zan\'e's\footnote{The first author wishes to dedicate this theorem to Zan\'e van der Merwe who it is hoped, will grow up to be a great mathematician.}]
Consider the set of graphs $\cG =\{G:\epsilon(G)=q\}$. For a graph $H \in \cG$, we have $\min(mw(\H_{\I}))=\min(\min(mw(\G_{\I'})))$ if and only if $H \cong P_{q+1}$.
\end{thm}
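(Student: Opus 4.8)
The plan is to determine the value of $\min(\min(mw(\G_{\I'})))$ over $G \in \cG$ exactly, and then invoke Lemma~\ref{Lem-3.2} to see which graphs attain it. First I would record the universal lower bound: for any $G$ with $\epsilon(G)=q$ and any index pattern $\I$, every one of the $q$ arcs of $\G_{\I}$ has $|w(a_i)| \ge 1$ (exactly as in the proof of Lemma~\ref{Lem-3.2}), so $mw(\G_{\I}) \ge q$; taking minima gives $\min(mw(\G_{\I})) \ge q$ for every $G \in \cG$, hence $\min(\min(mw(\G_{\I'}))) \ge q$.

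Next I would check that this bound is met, and identify $P_{q+1}$ as a graph that meets it. Since $\epsilon(P_{q+1})=q$ we have $P_{q+1}\in\cG$, and by part~(i) of Proposition~\ref{Prop-3.4} the consecutive index pattern $\I^{\ast}_1$ yields $\min(mw(\P_{q+1,\I^{\ast}_1}))=q$. Together with the lower bound this gives $\min(\min(mw(\G_{\I'})))=q$, and the "if" direction is then immediate: $H\cong P_{q+1}$ forces $\min(mw(\H_{\I}))=q=\min(\min(mw(\G_{\I'})))$.

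For the converse, suppose $H\in\cG$ satisfies $\min(mw(\H_{\I}))=q$. Then some index pattern $\I$ realizes $mw(\H_{\I})=q=\epsilon(H)$; since the $q$ arc-weights are positive integers summing to $q$, each equals $1$, that is, $\sT_{\I}=\{0,1\}$ and every edge of $H$ joins vertices with consecutive indices. Writing $n=|V(H)|$, this forces $E(H)\subseteq\{v_iv_{i+1}:1\le i\le n-1\}$, so $q\le n-1$, while connectedness of $H$ gives $q\ge n-1$; hence $q=n-1$, $E(H)$ is this full set, and $H\cong P_n=P_{q+1}$, recovering the remark after Lemma~\ref{Lem-3.2}. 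The bound and the path construction are routine and already in hand; the delicate point is precisely this last step, where the standing assumption that $H$ is connected is what excludes linear forests (and other spanning subgraphs of $P_n$ with fewer edges) and pins $H$ down to the whole path.
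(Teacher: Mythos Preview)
Your argument is correct and, in fact, cleaner than the paper's. The paper proceeds by induction on $q$: it checks small cases, assumes the claim for all $\epsilon(G)\le k$, and then argues that passing from $P_{k+1}$ to $P_{k+2}$ raises the minimum maturity weight by exactly $1$, the smallest possible increment. For the converse it asserts that a putative minimiser $H\ncong P_{k+2}$ must contain $P_{k+1}$ as a subgraph and that the extra edge must come from attaching a new leaf, forcing an increase of at least $2$; these two assertions are stated rather than justified and are the softest points in the paper's proof.

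You instead compute the global optimum directly: the trivial arc-weight bound $w(a_i)\ge 1$ gives $mw(\G_{\I})\ge q$ for every $G\in\cG$, Proposition~\ref{Prop-3.4} shows $P_{q+1}$ attains $q$, and then equality forces every arc-weight to equal $1$, pinning $E(H)$ inside the path edge set $\{v_iv_{i+1}\}$. Combining the resulting inequality $q\le n-1$ with the standing connectedness hypothesis (so $q\ge n-1$) identifies $H$ with $P_{q+1}$. This avoids induction entirely, makes explicit the role of connectedness (which the paper uses implicitly), and recovers the remark following Lemma~\ref{Lem-3.2} as a by-product. The only thing you might add for completeness is a word on why $|V(H)|\ge 2$ is available, but that is part of the paper's global conventions as well.
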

\begin{proof}
Clearly the result holds for $q=1,2,3$. Assume the result holds for all $G\in \cG$ with $4\le \epsilon(G)\le k$. Hence, $\min(mw(\P_{k+1,t=1}))=\min(\min(mw(\G_{\I'})))$, $\epsilon(G)=k$. Now, consider consider a graph $G$ with $\epsilon(G)=k+1$ and let $H \cong P_{k+2}$. Clearly, $\min(mw(\P_{k+2,t =1}))=\min(mw(\P_{k+1,t=1}))+1$. It is the minimum increase in maturity weight possible and hence, $\min(\min(mw(\G_{\I'})))=\min(mw(\H_{\I}))=\min(mw(\P_{k+2,t=1}))$.

Conversely, assume there exists a graph $H \ncong P_{k+2}$ such that $\min(mw(\H_{\I}))=\min(\min(mw(\G_{\I'})))$, with $\epsilon(H)=k+1$. Then, it follows that $P_{k+1}$ is a subgraph of $H$. Hence, to add the additional edge an additional pendant vertex (leaf) was added to $P_{k+1}$ to obtain $H$. This, however, implies that the increase in minimum maturity weight by $\min(mw(\H_{\I}))-\min(mw(\P_{t=1}))\ge 2$. It is a contradiction, since $\min(mw(\P_{k+2,t=1}))-\min(mw(\P_{k+1,t=1}))=1$. Therefore, we must have $H\cong P_{k+2}$.
\end{proof}

Note that we can not find a complete graph $K_q$ such that $\epsilon(K_q)=n$ for all integral values of $n$. Hence, a result analogous to Theorem \ref{Thm-3.6} to determine $\max(\max(mw(\G_{\I})))$ does not exist.

\subsection{Sprouting of Cycles}

\begin{prop}\label{Prop-4.7}
For the cycle $C_n$, where $n\ge 4$, we have $\min(mw(\C_{n,{\I^{\ast}_1}}))=2(n-1)$ and $\max(mw(\C_{n,{\I^{\prime}_2}}))=\max(mw(\P_{k+1,{\I^{\prime}_2}}))+1=\sum\limits_{i=0}^{\lceil \frac {n}{2}\rceil-2}(2n-3 -4i) + 1$. 
\end{prop}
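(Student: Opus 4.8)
The plan is to treat the cycle $C_n$ as $P_n$ together with one extra edge joining the two end vertices of the path, and to leverage the path results of Proposition~\ref{Prop-3.4} directly. Write $C_n = P_n + e$ where $e$ is the edge between the degree-one vertices of $P_n$. For the lower bound $\min(mw(\C_{n,\I^{\ast}_1}))$, I would exhibit the index pattern $\I^{\ast}_1 = \{v_1, v_2, \ldots, v_n\}$ obtained by labelling the vertices consecutively around the cycle. Under this labelling every ``path'' edge $v_i v_{i+1}$ contributes arc-weight $1$, giving $n-1$ in total by Lemma~\ref{Lem-3.2}, while the closing edge $v_1 v_n$ contributes arc-weight $n-1$; the sum is $2(n-1)$. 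To see this is minimal, I would argue that no index pattern can do better: a cyclic labelling forces the quantity $\sum_{i} |x_{i}-x_{i+1}|$ taken around the cycle (including the wrap-around term) to be at least $2(n-1)$, since starting from the position of the label $1$ and travelling around the cycle back to $1$, the partial sums must reach the maximum label $n$ and return, so the total variation around the closed walk is at least $2(n-1)$; equality holds exactly for the monotone-up-then-down arrangement, which is the consecutive labelling. This is essentially the discrete analogue of ``total variation of a closed loop is at least twice the oscillation.''

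For the upper bound, I would use the MMAW-Principle applied to the cycle. The key observation is that $C_n$ has exactly one more edge than $P_n$, so an optimal (maxi-max) index pattern for $C_n$ should coincide with the optimal pattern $\I^{\prime}_2$ for $P_n$ described in the proof of Proposition~\ref{Prop-3.4}, with the single additional edge being the one whose endpoints receive labels $\ell_1$ and $\ell_2$ (the ``innermost'' pair in the zig-zag arrangement $\ell_1,\ldots,n{-}2,2,n,1,n{-}1,3,n{-}3,\ldots,\ell_2$). By the choices of $(\ell_1,\ell_2)$ listed in Proposition~\ref{Prop-3.4}, we have $|\ell_1 - \ell_2| = 1$, so this extra edge contributes arc-weight exactly $1$. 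Hence $\max(mw(\C_{n,\I^{\prime}_2})) = \max(mw(\P_{n,\I^{\prime}_2})) + 1 = \sum_{i=0}^{\lceil n/2 \rceil - 2}(2n-3-4i) + 1$. (Here I read $k+1 = n$, i.e.\ $\P_{k+1,\I^{\prime}_2}$ is the path on $n$ vertices.) One still must check that folding the path's optimal labelling back into a cycle does not permit a strictly larger total — i.e.\ that no cyclic arrangement beats ``optimal path arrangement plus a weight-$1$ closing edge.'' I would justify this by noting that any cyclic index pattern, when the closing edge $v_1 v_n$ is deleted, yields a path index pattern whose maturity weight is at most $\max(mw(\P_{n,\I^{\prime}_2}))$, and the deleted edge has weight at most $n-1$; but one cannot simultaneously realize the path-optimal configuration and a large closing weight, so a more careful exchange argument pins the maximum at the stated value.

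The main obstacle I anticipate is the optimality (as opposed to feasibility) part of both bounds: feasibility is just a computation with the given index patterns, but proving that $2(n-1)$ cannot be beaten below and that the path-optimal-plus-one cannot be beaten above requires a genuine combinatorial argument about arrangements of $\{1,\ldots,n\}$ on a cycle. For the minimum this is clean (the closed-walk total-variation bound). For the maximum it is more delicate, because adding the closing edge changes which path-arrangements are even eligible; I would handle it by the exchange/swap argument sketched above, showing any improvement to the cyclic sum could be transferred to an improvement of the path sum, contradicting Proposition~\ref{Prop-3.4}. A secondary, purely bookkeeping nuisance is reconciling the notation $\P_{k+1,\I^{\prime}_2}$ in the statement with $k+1 = n$ and confirming the three possible $(\ell_1,\ell_2)$ cases all give $|\ell_1-\ell_2|=1$, which is immediate from the list in Proposition~\ref{Prop-3.4}. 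The hypothesis $n \ge 4$ is used to ensure $C_n$ is a genuine simple cycle distinct from degenerate small cases and that the zig-zag arrangement has a well-defined innermost pair.
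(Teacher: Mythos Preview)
Your overall strategy matches the paper's: view $C_n$ as $P_n$ plus the closing edge, exhibit the consecutive labelling for the minimum and the MMAW zig-zag labelling for the maximum, and observe that in the latter the two endpoints carry labels differing by $1$. The differences are in how the optimality halves are handled.

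For part (i), the paper establishes feasibility with the same consecutive labelling you give, but its minimality argument only considers a single swap (interchange $v_n$ with some $v_i$) from that labelling, which does not by itself rule out all $n!$ patterns. Your closed-walk total-variation bound --- split the cycle at the positions of labels $1$ and $n$, and apply the triangle inequality on each arc --- is a genuinely cleaner and complete argument that the paper does not use.

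For part (ii), the paper proceeds by induction on $n$: the base case $n=3$ is checked directly, and the inductive step observes that after MMAW labelling of $P_{k+1}$ the two endpoints again have index difference $1$, so closing the cycle adds exactly $1$. You instead argue directly from the structure of the MMAW arrangement (the $(\ell_1,\ell_2)$ list in Proposition~\ref{Prop-3.4}) and then sketch an exchange argument for optimality. The feasibility content is the same; the paper's inductive wrapper does not really supply a stronger optimality proof than your sketch --- it likewise relies on the MMAW-Principle rather than a self-contained extremal argument. One caution about your exchange idea: deleting an arbitrary edge of a cyclic arrangement need not leave an edge of weight~$1$ (e.g.\ $1,3,5,2,4$ on $C_5$ has minimum adjacent difference $2$), so the bound ``cyclic sum $\le$ path-max $+$ (minimum edge weight)'' does not immediately force the minimum edge weight to be~$1$; you would need to sharpen that step to match even the paper's level of rigor.
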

\begin{proof}
Identify the cycle $C_n, n \ge 4$ as the graph with the $n$ vertices seated on the circumference of a circle with a vertex seated centre at the top. Then, the labeling of vertices of $C_n$ can be done as explained below.

\ni \textbf{(i)} Label the top vertex by $v_1$ and label the other vertices clockwise $v_2, v_3, ..., v_n$. Call the index pattern $\I^{\ast}_1$. Clearly, the arc-weights, $w(v_i,v_j)=1$ except for the arc $w(v_1,v_n)=n-1$. Hence, $\T_{\I^{\ast}_1} =\{0, 1, n-1\}$. Therefore, all arcs  having arc-weight $1$, will arc at $t=1$. There are exactly $(n-1)$ such arcs in $\C_{n,t=1}$ and the last arc $(v_1,v_n)$ arcs at $t=(n-1)$, so $mw(\C_{n,t= n-1})=2(n-1)$. Without loss of generality, interchange the vertex labeling $v_n$ and $v_i$, $i<n$ to obtain $\I'$. The only possible decrease in the maturity weight is on condition that $n-2<i<n$, $i \in \R$. For $i=n-1$ we have $mw(\P_{n,{\I'}})=mw(\P_{n,t=n-1})$. Hence, $\min(mw(\P_{n,{\I^{\ast}_1}}))=mw(\P_{n,t=n-1})$. 

\ni \textbf{(ii)} Consider the path $P_3$ and label the vertices $v_3, v_1, v_2$ in accordance with the MMAW-Principle. Then, here the end vertices have index difference $1$ hence\\ $\max(mw(\C_{3,t=2}))=4=\max(mw(\P_{3,t=2}))+1$, as the end vertices has the indexes $\lceil \frac{3}{2}\rceil$, $\lceil \frac{3}{2}\rceil + 1$ respectively. Next, assume that the result holds for $C_k$, $k \ge 4$. Hence, $\max(mw(\C_{k,{\I^{\prime}_2}}))=\max(mw(\P_{k,1}))+1$ and the end vertices of the path $P_k$ have indexes either $\lceil \frac{k}{2}\rceil$, $\lceil \frac{k}{2}\rceil+1$ or $\lceil \frac{k-1}{2}\rceil$, $\lceil \frac{k-1}{2}\rceil+1$ respectively. Now consider the path $P_{k+1}$. Clearly, after labeling the vertices in accordance with the MMAW-Principle, the end vertices have indexes either $\lceil \frac{k+1}{2}\rceil$, $\lceil \frac{k+1}{2}\rceil + 1$ or $\lceil \frac{k+1}{2}\rceil$, $\lceil \frac{k}{2}\rceil + 1$.  In both cases the index difference between the end vertices is $1$ and hence the result $\max(mw(\C_{k,{\I^{\prime}_2}}))=\max(mw(\P_{k+1,{\I^{\prime}_2}}))+1$ holds. Hence, the result follows by induction.
\end{proof}

\begin{cor}
For the cycle $C_n$ we have $\min(mw(\C_{3,t=1}))=\max(mw(\C_{3,t=1}^s))$ and $\min(mw(\C_{n,{\I^{\ast}_1}}))=mw(\C_{n,t=(n-1)})<\max(mw(\C_{n,{\I^{\prime}_2}}))$, where $n \ge 4$.
\end{cor}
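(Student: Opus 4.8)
The plan is to treat the final corollary exactly as its predecessor corollaries are treated, namely as a routine bookkeeping consequence of the cycle proposition (Proposition~\ref{Prop-4.7}) together with the path proposition (Proposition~\ref{Prop-3.4}) and Lemma~\ref{Lem-3.2}. Two assertions must be verified: first, that for $C_3$ the minimum and maximum maturity weights coincide (the $t=1$ sprouting graph version notwithstanding the superscript $s$), and second, that for all $n\ge 4$ the minimum maturity weight, realised as $mw(\C_{n,t=(n-1)})$, is strictly below the maximum.

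First I would dispose of the $n=3$ case by direct inspection: $C_3=K_3$ has only $3!$ index patterns, and by Proposition~\ref{Prop-4.7}(i) applied with $n=3$ (or simply by Corollary~\ref{Cor-4.2a} for $K_3$), every index pattern yields the same maturity weight, so in particular $\min(mw(\C_{3,t=1}))=\max(mw(\C_{3,t=1}^s))$; one just notes both equal $1+1+2=4$. Then, for $n\ge 4$, I would invoke the two explicit formulas from Proposition~\ref{Prop-4.7}: $\min(mw(\C_{n,{\I^{\ast}_1}}))=2(n-1)$, which the proposition already identifies with $mw(\C_{n,t=(n-1)})$, and $\max(mw(\C_{n,{\I^{\prime}_2}}))=\sum_{i=0}^{\lceil n/2\rceil-2}(2n-3-4i)+1$. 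The strict inequality then reduces to the numerical claim that $\sum_{i=0}^{\lceil n/2\rceil-2}(2n-3-4i)+1 > 2(n-1)$ for every $n\ge 4$.

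To close that gap I would evaluate the arithmetic-series sum: its first term is $2n-3$, so the sum alone is at least $2n-3$, hence $\max(mw(\C_{n,{\I^{\prime}_2}}))\ge 2n-3+1 = 2n-2 = 2(n-1)$, giving $\ge$ rather than $>$. To obtain strictness I would observe that for $n\ge 4$ the sum has at least two terms (since $\lceil n/2\rceil-2\ge 0$, and for $n\ge 5$ it has at least two, while for $n=4$ the sum is the single term $2\cdot4-3=5$, so $\max = 6 > 6 = 2(n-1)$ fails to be strict — this is the point requiring care). I would therefore check $n=4$ separately and honestly: there $\min = 2(n-1)=6$ and the formula gives $\max = 5+1 = 6$, so in fact equality holds at $n=4$; this suggests the corollary as literally stated needs the hypothesis $n\ge 5$ for the strict inequality, or else one should re-examine whether $\P_{k+1}$ in the proposition means $\P_{n-1}$, i.e.\ $k=n-1$, making the path term $\max(mw(\P_{n-1,\I'_2}))$ rather than a sum up to $\lceil n/2\rceil-2$.

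The main obstacle, then, is not the proof architecture — which is a two-line appeal to the preceding propositions — but reconciling the index bounds in the two \emph{max} formulas of Proposition~\ref{Prop-4.7} so that the comparison with $2(n-1)$ is genuinely strict; I expect that once the indexing $k=n-1$ is pinned down, the path maximum $\max(mw(\P_{n,\I'_2}))$ for $n\ge 5$ exceeds $2(n-1)$ comfortably (its leading term $2n-3$ plus a positive remainder plus the $+1$ from closing the cycle), and the $n=3,4$ small cases are handled by the direct computations above. I would present the argument as: (1) $n=3$ by inspection; (2) $n\ge 4$ via the explicit formulas of Propositions~\ref{Prop-3.4} and~\ref{Prop-4.7}, reducing to an arithmetic inequality; (3) verification of that inequality, flagging $n=4$ as the boundary case.
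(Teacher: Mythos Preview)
Your approach is exactly the paper's: the original proof is the single sentence ``The proof follows immediately from Proposition~\ref{Prop-4.7}.'' You have simply carried out the bookkeeping that the paper omits.

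In doing so you have uncovered something the paper does not address: taking the closed formulas of Propositions~\ref{Prop-3.4} and~\ref{Prop-4.7} at face value, the case $n=4$ gives $\min=\max=6$, so the strict inequality asserted in the corollary would fail there. This is a legitimate observation; the paper's one-line proof does not confront it. (A direct check of $C_4$ with the labeling $1,3,2,4$ around the cycle gives maturity weight $2+1+2+3=8>6$, which suggests the issue lies upstream in the $\max$ formula of Proposition~\ref{Prop-3.4} or~\ref{Prop-4.7} rather than in the corollary itself.) Your instinct to flag $n=4$ as a boundary case requiring separate verification is sound, and your overall plan --- small cases by inspection, general case by the explicit formulas, then an arithmetic inequality --- is the right architecture; it is just more scrupulous than the paper's own treatment.
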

\begin{proof}
The proof follows immediately from Proposition \ref{Prop-4.7}.
\end{proof}

From Proposition \ref{Prop-4.7}, it follows that for every positive integer $n\ge 3$, there exist a graph and an index pattern $\I$ for which $\min(mw(\G_{\I}))=2n$. The graph is the cycle $C_{n+1}$ with an index pattern found in the first part of the proof of Proposition \ref{Prop-4.7}. An analogous result cannot be found for $\max(mw(\G_{\I}))$. 

It has been established that if two different random index patterns of a graph $G$ say $\I_1$ and $\I_2$ result in $\T_{\I_1}$ and $\T_{\I_2}$ respectively, such that $\T_{\I_1}=\T_{\I_2}$ then, $\T_{\I_1}=\T_{\I_2} \centernot\implies mw(\G_{\I_1}) = mw(\G_{\I_2})$. 

\subsection{Sprouting of  Stars}

\begin{thm}\label{Thm-4.9}
The sprout graph of star $K_{(1,n)}$ has 
\begin{enumerate}\itemsep0mm
\item[(i)] $\min(mw(\K_{(1, n),{\I^{\ast}_1}}))=
\begin{cases}
2\sum \limits_{i=1}^{\lceil \frac{(n+1)}{2}\rceil-1} i + \lceil \frac{(n+1)}{2}\rceil, & \text{if $n\ge 3$ and odd},\\
2\sum \limits_{i=1}^{\lceil \frac{(n+1)}{2}\rceil-1} i, & \text{if $n\ge 2$ and even}.
\end{cases}$
\item[(ii)] $\max(mw(\K_{(1, n),{\I^{\prime}_2}}))= \sum\limits_{i=1}^{n}i,~\forall n \in \N.$
\end{enumerate}
\end{thm}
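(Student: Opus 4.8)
The plan is to collapse the whole statement to a one–variable optimization. Fix an index pattern $\I$ of $K_{(1,n)}$ and let $c\in\{1,2,\ldots,n+1\}$ be the label assigned to the centre; then the $n$ leaves receive exactly the labels in $\{1,\ldots,n+1\}\setminus\{c\}$, and since every edge of the star joins the centre to a leaf, Definition \ref{Defn-2.7} gives $mw(\K_{(1,n),\I})=\sum_{j\in\{1,\ldots,n+1\}\setminus\{c\}}|c-j|=:f(c)$. Thus $mw(\K_{(1,n),\I})$ depends on $\I$ only through $c$ (the directedness of the arcs and the exact shape of $\sT_{\I}$ play no role, since $mw$ merely adds up the $|i-j|$), and consequently $\min(mw(\K_{(1,n),\I}))=\min_{c}f(c)$ and $\max(mw(\K_{(1,n),\I}))=\max_{c}f(c)$, the extrema being over $c\in\{1,\ldots,n+1\}$.

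Next I would record the shape of $f$. A short computation of the forward difference shows $f(c+1)-f(c)=2c-(n+1)$ for $1\le c\le n$: the labels $\le c$ each contribute $+1$, the label $c+1$ contributes $-1$, and the labels $\ge c+2$ each contribute $-1$. Hence $f$ is strictly decreasing while $c<\tfrac{n+1}{2}$ and strictly increasing while $c>\tfrac{n+1}{2}$; that is, $f$ is discretely convex and symmetric about $\tfrac{n+1}{2}$. For part (ii), a discretely convex function on $\{1,\ldots,n+1\}$ attains its maximum at an endpoint, and by the symmetry $f(1)=f(n+1)$; the placement of the centre at the extreme label $1$ (equivalently $n+1$) is the labeling dictated by the MMAW-principle for a star, so it is a legitimate $\I^{\prime}_2$. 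Then $f(1)=\sum_{j=2}^{n+1}(j-1)=\sum_{i=1}^{n}i$, which is the claimed value and is valid for every $n\in\N$ (including $n=1$, where $K_{(1,1)}=P_2$).

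For part (i), $f$ is minimized at the median label. When $n$ is even, $n+1$ is odd and the unique minimizer is $c=\lceil\tfrac{n+1}{2}\rceil$; when $n$ is odd, $n+1$ is even and the two central labels $c=\lceil\tfrac{n+1}{2}\rceil$ and $c=\lceil\tfrac{n+1}{2}\rceil+1$ both attain the minimum. Either way this is exactly the index pattern $\I^{\ast}_1$ produced by the MmAW-principle. Evaluating $f$ by splitting the sum into its part below $c$, namely $\sum_{k=1}^{c-1}k$, and its part above $c$, namely $\sum_{k=1}^{\,n+1-c}k$, and then simplifying with $c=\lceil\tfrac{n+1}{2}\rceil$ according to the parity of $n$, yields $2\sum_{k=1}^{\lceil(n+1)/2\rceil-1}k$ in the even case (since there $c-1=n+1-c$) and $2\sum_{k=1}^{\lceil(n+1)/2\rceil-1}k+\lceil\tfrac{n+1}{2}\rceil$ in the odd case.

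I expect the only delicate point to be this last bit of bookkeeping — matching $c-1$ and $n+1-c$ to $\lceil\tfrac{n+1}{2}\rceil-1$ under the two parities, and keeping the stray middle term $\lceil\tfrac{n+1}{2}\rceil$ in the odd case — together with checking (against Proposition \ref{Prop-3.4} at $n=2$, i.e.\ $K_{(1,2)}=P_3$, and against small stars such as $n=3,4$) that the chosen labelings are genuine admissible realizations of the two principles rather than ad hoc optima. Everything else is the elementary convexity argument above.
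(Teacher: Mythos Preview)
Your argument is correct and substantially cleaner than the paper's. The paper establishes part~(i) by induction on $n$ in steps of two, handling the odd and even cases separately; the base cases $K_{(1,3)}$ and $K_{(1,2)}$ are settled by writing out all $4!$ and $3!$ index patterns in explicit tables and reading off the minimum, and the inductive step is an informal discussion of how the central index shifts by one and the ``mirror'' leaf differences each increase by one when passing from $K_{(1,q)}$ to $K_{(1,q+2)}$. Part~(ii) is then dispatched by analogy. By contrast, you collapse everything to the single observation that $mw$ depends only on the centre label $c$, compute the forward difference $f(c+1)-f(c)=2c-(n+1)$, and read off both extrema from the resulting discrete convexity and symmetry. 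This buys you a uniform treatment of the two parts, no case tables, no induction, and in fact a transparent proof of Corollary~\ref{Cor-4.10} (the location of the optimal centre label) as a by-product rather than a separate argument. The paper's approach, on the other hand, stays closer to the combinatorial picture of leaves and mirror differences and makes the connection to the MMAW/MmAW heuristics slightly more visible, at the cost of a longer and looser argument.
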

\begin{proof} 
\ni \textbf{(i)} First consider the star graph $K_{(1,3)}$. Note that in the table that follows; $i, j, k, l \in \{1,2,3,4\}$. The possible $4!$ index patterns with the corresponding values of $mw(\K_{(1,3),\I})$ are given in the following table.

\begin{tabular}{|c|c|c|c|c|}
\hline
central vertex  $v_i$ &  leaf $v_j$ &  leaf $v_k$ & leaf $v_l$ & $mw(\K_{(1,3),\I})$\\
\hline
1 & 2 & 3 & 4 & 6\\
\hline
1 & 2 & 4 & 3 & 6\\
\hline
1 & 3 & 2 & 4 & 6\\
\hline
1 & 3 & 4 & 2 & 6\\
\hline
1 & 4 & 2 & 3 & 6\\
\hline
1 & 4 & 3 & 2 & 6\\
\hline
2 & 1 & 3 & 4 & 4\\
\hline
2 & 1 & 4 & 3 & 4\\
\hline
2 & 3 & 1 & 4 & 4\\
\hline
2 & 3 & 4 & 1 & 4\\
\hline
2 & 4 & 1 & 3 & 4\\
\hline
2 & 4 & 3 & 1 & 4\\
\hline
3 & 1 & 2 & 4 & 4\\
\hline
3 & 1 & 4 & 2 & 4\\
\hline
3 & 2 & 1 & 4 & 4\\
\hline
3 & 2 & 4 & 1 & 4\\
\hline
3 & 4 & 1 & 2 & 4\\
\hline
3 & 4 & 2 & 1 & 4\\
\hline
4 & 1 & 2 & 3 & 6\\
\hline
4 & 1 & 3 & 2 & 6\\
\hline
4 & 2 & 1 & 3 & 6\\
\hline
4 & 2 & 3 & 1 & 6\\
\hline
4 & 3 & 1 & 2 & 6\\
\hline
4 & 3 & 2 & 1 & 6\\
\hline
\end{tabular} 

Clearly, $\min(mw(\K_{(1,3),t=2})) = 4 =2\sum \limits_{i=1}^{\lceil \frac{(3+1)}{2}\rceil - 1}i + \lceil \frac{(3+1)}{2}\rceil$. Therefore, the results holds for $K_{(1,3)}$. Next, assume it holds for $K_{(1,q)},~q>3$ and $q$ is odd. Hence, it is assumed that $\min(mw(\K_{(1,q),\I}))= 2\sum\limits_{i=1}^{\lceil\frac{(q+1)}{2}\rceil-1}i +\lceil\frac{(q+1)}{2}\rceil$. 

Now, consider the graph $K_{(1,q+2)}$. We have $\lceil\frac{q+3}{2}\rceil=\lceil\frac{(q+1)}{2}\rceil +1$. Hence, the central vertex index increases to $\lceil \frac{(q+1)}{2}\rceil +1$. This results in the central vertex, $v_{\lceil \frac{(q+1)}{2}\rceil}$ in $K_{(1,q)}$ to become a leaf in $K_{(1,q+2)}$, and vertex $v_{(\lceil \frac{(q+1)}{2}\rceil +1)}$ becomes the central vertex in $K_{(1,q+2)}$.  Thus, for all $v_i, i <\lceil\frac{q+1}{2}\rceil$ in $K_{(1,q)}$, the difference $|\lceil \frac{(q+1)}{2}\rceil - i|$ increases by $1$ in $K_{(1,q+2)}$. The value $|(\lceil \frac{(q+1)}{2}\rceil +1) - \lceil \frac{(q+1)}{2}\rceil|$ repeats twice due to the index interchanging of the central vertex. Then, exact mirror values follow with the value $\lceil \frac{q+1}{2}\rceil +1=\lceil\frac{q+3}{2}\rceil$, added as well. Hence, the result (i) holds for $K_{(1,q)},q>3$ and $q$ is odd. Hence, in general it follows that $\min(mw(\K_{(1,n),{\I^{\ast}_1}}))=2\sum\limits_{i=1}^{\lceil\frac{(n+1)}{2}\rceil-1}i)+\lceil \frac{(n+1)}{2}\rceil$, if $n \ge 3$ and is odd.
 
Now, consider the graph $K_{(1,2)}$. Note that in the table that follows; $i, j, k \in \{1,2,3\}$. The possible $3!$ index patterns with the corresponding values $mw(\K_{(1,2),\I})$ are given in the following table.  

\begin{tabular}{|c|c|c|c|c|}
\hline
Central vertex $v_i$ & leaf $v_j$ & leaf $v_k$ & $mw(\K_{(1,2),\I})$\\
\hline
1 & 2 & 3 & 3\\
\hline
1 & 3 & 2 & 3\\
\hline
2 & 1 & 3 & 2\\
\hline
2 & 3 & 1 & 2\\
\hline
3 & 1 & 2 & 3\\
\hline
3 & 2 & 1 & 3\\
\hline
\end{tabular} 

Clearly, $\min(mw(\K_{(1,2),\I}))=2=2\sum\limits_{i=1}^{\lceil \frac{(2+1)}{2}\rceil-1}i$. Hence, the results holds for $K_{(1,2)}$. Assume it holds for $K_{(1,q)}, q>2$ and $q$ is even. Hence, it is assumed that $\min(mw(K_{(1,q),\I}))=2\sum\limits_{i=1}^{\lceil\frac{(q+1)}{2}\rceil-1}i$. Now, consider the graph $K_{(1,q+2)}$. As explained in the first part of (i), we have that $\lceil\frac{(q+3)}{2}\rceil= \lceil \frac{(q+1)}{2}\rceil +1$. Hence, the central vertex index increases to $\lceil \frac{(q+1)}{2}\rceil +1$. This results in the central vertex $v_{\lceil \frac{(q+1)}{2}\rceil}$ in $K_{(1,q)}$ to become a leaf in $K_{(1,q+2)}$ and vertex $v_{(\lceil \frac{(q+1)}{2}\rceil +1)}$ becomes central vertex in $K_{(1,q+2)}$.  Thus, for all $v_i, i < \lceil \frac{(q+1)}{2}\rceil$ in $K_{(1,q)}$, the difference $|\lceil \frac{(q+1)}{2}\rceil - i|$ increases by $1$ in $K_{(1,q+2)}$. Therefore, as explained in the previous case, the result follows.

\ni \textbf{(ii)} By labeling the central vertex by $v_1$ and the leafs by  $v_2, v_3,\ldots, v_{n+1}$  the result can be proved similarly explained in $(i)$.
\end{proof}

\begin{cor}\label{Cor-4.10}
For the sprout star $\K_{(1,n),\I}$, the central vertex is indexed $\ell$ where
\begin{equation*} 
\ell \in
\begin{cases}
\{\lceil\frac{(n+1)}{2}\rceil, \lceil \frac{(n+1)}{2}\rceil + 1 \}, & \text{for $\min(mw(\K_{(1,n),\I}))$ if $n\ge 3$ is odd},\\
\{\lceil\frac{(n+1)}{2}\rceil\}, & \text {for $\min(mw(\K_{(1,n),\I}))$ if $n\ge 3$ is even},\\
\{1,n+1\}, & \text{for $\max(mw(\K_{(1,n),\I}))$}. 
\end{cases}
\end{equation*} 
\end{cor}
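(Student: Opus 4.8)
The plan is to reduce the whole statement to a single elementary fact about sums of absolute deviations. First I would note that the star $K_{(1,n)}$ has $n+1$ vertices carrying the indices $1,2,\ldots,n+1$, that every edge joins the centre to a leaf, and that if the central vertex receives index $c$ then, by Definition \ref{Defn-2.7},
\[
mw(\K_{(1,n),\I}) = \sum_{i=1}^{n+1} |i-c|,
\]
since the term $i=c$ contributes $0$ and every other index is carried by exactly one leaf. In particular the maturity weight depends only on $c$, not on how the remaining indices are distributed among the leaves, so $\min(mw(\K_{(1,n),\I}))$ and $\max(mw(\K_{(1,n),\I}))$ are attained precisely by the values of $c\in\{1,\ldots,n+1\}$ that minimise, respectively maximise, $\varphi(c):=\sum_{i=1}^{n+1}|i-c|$.

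Next I would invoke the standard behaviour of $\varphi$. Writing $\varphi(c+1)-\varphi(c)=2c-(n+1)$, the function is strictly decreasing while $c<\tfrac{n+1}{2}$ and strictly increasing while $c>\tfrac{n+1}{2}$; hence it is maximised exactly at the two endpoints $c=1$ and $c=n+1$, which give equal values by the reflection symmetry $i\mapsto n+2-i$, and it is minimised at the median index (or indices) of $\{1,\ldots,n+1\}$. If $n$ is even then $n+1$ is odd and the unique minimiser is $c=\tfrac{n+2}{2}=\lceil\tfrac{n+1}{2}\rceil$; if $n$ is odd then $n+1$ is even and the minimisers are the two consecutive values $c=\tfrac{n+1}{2}$ and $c=\tfrac{n+3}{2}$, i.e. $c\in\{\lceil\tfrac{n+1}{2}\rceil,\lceil\tfrac{n+1}{2}\rceil+1\}$. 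This is exactly the case split claimed for $\ell$.

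Finally I would confirm consistency with Theorem \ref{Thm-4.9}: evaluating $\varphi$ at $c=\lceil\tfrac{n+1}{2}\rceil$ splits the leaves into the block of indices below the centre and the block above it, whose deviation sums telescope to $\sum_{i=1}^{\lceil(n+1)/2\rceil-1} i$ on the shorter side and, for odd $n$, to $\sum_{i=1}^{\lceil(n+1)/2\rceil-1} i+\lceil\tfrac{n+1}{2}\rceil$ on the longer side, recovering both formulas of Theorem \ref{Thm-4.9}(i); and $\varphi(1)=\sum_{i=1}^{n}i$ matches Theorem \ref{Thm-4.9}(ii). I expect the only mildly delicate point to be the ceiling/parity bookkeeping — in particular verifying that for odd $n$ the two candidates $\lceil\tfrac{n+1}{2}\rceil$ and $\lceil\tfrac{n+1}{2}\rceil+1$ really yield the same value of $\varphi$ (again the reflection symmetry of $\{1,\ldots,n+1\}$ handles this), so that both legitimately appear as possible central-vertex indices for $\min(mw(\K_{(1,n),\I}))$.
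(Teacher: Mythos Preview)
Your proposal is correct and follows essentially the same idea as the paper's proof: both reduce the problem to observing that the maturity weight of the star depends only on the central index $c$ through $\sum_{i=1}^{n+1}|i-c|$, and then argue that this sum is minimised at the median index (or pair of medians) and maximised at the endpoints, with the parity of $n$ governing the number of minimisers. The paper phrases this via ``minimal mirror image vertex index differences'' and checks consistency with the formulas of Theorem~\ref{Thm-4.9}, whereas you make the argument sharper by computing the discrete difference $\varphi(c+1)-\varphi(c)=2c-(n+1)$ to establish strict unimodality; this extra step cleanly rules out any other optimal indices, which the paper's more informal mirror-image reasoning leaves somewhat implicit.
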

\begin{proof}
If $n$ is odd, $n+1$ is even and we have two central indexes namely, $\lceil\frac{(n+1)}{2}\rceil$ and $\lceil\frac{(n+1)}{2}\rceil + 1$ allowing minimal mirror image vertex index differences. Hence, $\min(mw(\K_{(1,n),\I}))= 2\sum\limits_{i=1}^{\lceil \frac{(n+1)}{2}\rceil-1}i+\lceil\frac{(n+1)}{2}\rceil= 2\sum\limits_{i=\lceil\frac{(n+1)}{2}\rceil -1}^{n+1}{|(\lceil \frac{(n+1)}{2}\rceil +1)-i|}+\lceil \frac{(n+1)}{2}\rceil$. 

If $n$ is even, $n+1$ is odd and hence we have a unique central vertex index allowing  minimal mirror image vertex index differences to be exactly, $\lceil \frac{(n+1)}{2}\rceil$. Hence, $\min(mw(\K_{(1,n),\I}))=2\sum\limits_{i=1}^{\lceil\frac{(n+1)}{2}\rceil-1}i)=2\sum \limits_{i= \lceil \frac{(n+1)}{2}\rceil-1}^{n+1} {|\lceil \frac{(n+1)}{2}\rceil - i|)}$. Hence we have the result as
\begin{equation*} 
\ell \in
\begin{cases}
\{\lceil \frac{(n+1)}{2}\rceil, \lceil \frac{(n+1)}{2}\rceil + 1\}, & \text{for $\min(mw(\K_{(1,n),\I}))$ if $n \ge 3$ and is odd},\\
\{\lceil\frac{(n+1)}{2}\rceil\}, & \text{for $\min(mw(\K_{(1,n),\I}))$ if $n \ge 3$ and is even},
\end{cases}
\end{equation*} 
follows.

Since $|1-(i+1)|$, for all $i \in \{2, 3,\ldots, n\}$, is  equal to $|(n+1)-i|$, for all $i\in \{1, 2,\ldots, n-1\}$ and assures maximal vertex index differences, the result $\ell \in \{1,n+1\}$ for determining $\max(mw(\K_{(1,n),\I}))$ follows.
\end{proof}

\subsection{Sprout  Complete Bi-partite Graphs}

A complete bi-partite graph $K_{(n,m)}$ has $V(K_{(n,m)}) = \{d_i: 1 \le i \le n\}\bigcup \{d'_i: 1 \le i \le m \}$ and $E(K_{(n,m)}) = \{d_id'_j: 1 \le i \le n$, $1 \le j \le m\}$.
\begin{prop}
For a complete bi-partite graph $K_{(n,m)}$, $n,m \ge 2$ we have\\
(i) $\min(mw(\K_{(n,m),{\I^{\ast}_1}}))=
\begin{cases}
\frac{nm}{2}(n+m-1)+\frac{m}{2}(m+1), & \text{if $n+m$ is odd},\\
\frac{nm}{2}(n+m), & \text{if $n+m$ is even}. 
\end{cases}$ \\\\
(ii) $\max(mw(\K_{(n,m),{\I^{\prime}_2}}))=
\begin{cases}
(n(n-1)+\frac{nm}{2}(n+m), & \text{if $n+m$ is even},\\
(n(\lfloor\frac{n+m}{2}\rfloor-1)+\frac{n}{2}(n+1))(n+m), & \text{if $n+m$ is odd}. 
\end{cases}$ 
\end{prop}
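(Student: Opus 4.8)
The plan is to handle the four regimes --- minimum versus maximum, $n+m$ even versus odd --- by exhibiting in each case an explicit index pattern of $K_{(n,m)}$ produced by the relevant arc-weight principle, computing the maturity weight of the resulting sprout graph by a direct double summation over its $nm$ arcs, and then arguing extremality from the inequalities recorded at the start of Section 3. Throughout I will write the index set assigned to the $n$-class as $p_1<p_2<\cdots<p_n$ and the index set assigned to the $m$-class as $q_1<q_2<\cdots<q_m$, so that in every case $mw(\K_{(n,m),\I})=\sum_{s=1}^{n}\sum_{t=1}^{m}|p_s-q_t|$. The one computational device I will use repeatedly is to count each $|p_s-q_t|$ by the unit gaps it straddles: if $a_k$ and $b_k=k-a_k$ denote how many of the first $k$ indices belong to the $n$-class and the $m$-class respectively, then $mw=\sum_{k=1}^{n+m-1}\bigl(a_k(m-b_k)+b_k(n-a_k)\bigr)$, and each summand equals $2a_k^2+a_k(m-n-2k)+kn$, a convex function of the single integer $a_k$ subject only to $a_0=0$, $a_{n+m}=n$ and $a_k-a_{k-1}\in\{0,1\}$.

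For part (i) I would apply the MmAW-Principle: since every $n$-class vertex is adjacent to every $m$-class vertex, the minimum is forced by making the cross-class index differences collectively as small as possible, i.e. by interleaving the two index sets as evenly as their cardinalities permit. When $n+m$ is even the two classes can be placed at alternate positions; when $n+m$ is odd one class must receive two consecutive indices, and the surplus bookkeeping around that spot is what produces the correction term $\tfrac{m}{2}(m+1)=\binom{m+1}{2}$. With the pattern fixed, the interleaved choice keeps every $a_k$ as close as possible to $\tfrac{k}{2}+\tfrac{n-m}{4}$, which is exactly where the convex summand above is smallest, so summing over $k$ and splitting by the parity of $n+m$ yields the two displayed closed forms; the minimality direction is then the statement that these near-central values of the $a_k$ can be attained simultaneously under the monotonicity constraint. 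For part (ii) I would instead invoke the MMAW-Principle, which pushes the cross-class differences to the other extreme: start from the configuration in which one partite set receives a block of consecutive indices and the other the complementary block, then refine it by splitting the $n$-class toward the two ends exactly as the unique central vertex of a star is driven to an endpoint in the proof of Theorem~\ref{Thm-4.9}(ii). Here each $a_k$ is taken at the boundary of its feasible range, which maximizes the same convex summand; summing and again separating the cases $n+m$ even and $n+m$ odd gives the stated expressions, with the extra $n(n-1)$ (respectively the factored form) recording the gain of the split-ends pattern over a single block.

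The routine part is the telescoping of $\sum_{s,t}|p_s-q_t|$, which I would organize the way Propositions~\ref{Prop-3.1} and~\ref{Prop-3.4} organize theirs, by collecting arcs of equal arc-weight; an induction on $n+m$ in steps of two, mirroring the induction used for stars, is an equally serviceable route and keeps the parity split transparent. The genuinely delicate point --- the one the MmAW and MMAW principles are only informally guaranteed to settle --- is the optimality claim: the convexity remark pins down the best value of each gap-term in isolation, but one still has to verify that these per-gap optima are jointly realizable by a single monotone sequence $a_0\le a_1\le\cdots\le a_{n+m}$ with unit increments ending at $a_{n+m}=n$, and that the interleaved (respectively split-block) index patterns are the ones that realize them. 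I expect this compatibility argument, together with the parity casework, to be the main obstacle; once it is in place the rest is the elementary arithmetic of finite sums.
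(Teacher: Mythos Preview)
Your framework --- rewriting $mw$ as $\sum_{k}\bigl(a_k(m-b_k)+b_k(n-a_k)\bigr)$ and exploiting that each summand is convex in $a_k$ --- is a genuinely different and, in principle, more rigorous route than the paper's. The paper never argues optimality beyond a one-line appeal to the informal MMAW/MmAW principles: it fixes a specific labeling, lists the $nm$ index differences as the entries of an explicit matrix, reorganises that matrix into a triangular array, and sums row by row. Your convexity device would actually supply the extremality step the paper omits.

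The gap is that the index patterns you propose are not the ones the paper uses, and they will not reproduce the stated closed forms. For part (i) the paper does \emph{not} interleave the two classes; it assigns to the $n$-class the indices $\{1,m+2,m+3,\ldots,m+n\}$ and to the $m$-class the consecutive block $\{2,3,\ldots,m+1\}$. For part (ii) the paper uses two contiguous blocks, $\{1,\ldots,n\}$ against $\{n+1,\ldots,n+m\}$, with no ``splitting toward the two ends.'' The displayed formulas are meant to be the weights of \emph{those} particular labelings. Your interleaved pattern already undercuts the value claimed in (i) --- for $K_{2,2}$ it gives $6$ while $\tfrac{nm}{2}(n+m)=8$ --- and your own convexity computation shows that maximising each summand pushes $a_k$ to the boundary of its feasible interval, which is realised by the single-block assignment, not a split-ends one. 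So carrying your plan through would yield numbers that disagree with both parts of the proposition; to match the paper you would have to adopt its labelings and its direct matrix summation, whereas your (sounder) optimisation argument points to different extremisers than the ones the proposition records.
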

\begin{proof}
\textbf{(i)} Consider a complete bi-partite graph $K_{(n,m)}$, $n,m \ge 2$ and $n\ge m$. Without loss of generality let the left column have $n$ vertices and the right column have $m$ vertices. Label the vertices according to $\I^{\ast}_1$ as follows; the left column from top down, $v_1, v_{m+2}, v_{m+3}, \ldots , v_{m+n}$ and the right column from top down, $v_2, v_3, v_4, \ldots , v_{m+1}$. Clearly in terms of the MMAW-Principle, the maximum number of edges have been removed from $K_{(n+m)}$, all with maximum index difference, to construct $K_{(n,m)}$.

\ni \textbf{Subcase (i)(a):} Assume that $n+m$ is odd. The index differences $((m+2)+i)-j$, where $1\le i\le n-2$ and $2\le j\le m+1$, can be written in a $(n-1)\times (n+m)-2$ matrix form as 
\begin{equation*}
\A =
\begin{pmatrix}
1 & 2 & 3 & \ldots & m & 0 & 0 & \ldots & 0\\
0 & 2 & 3 & 4 & \ldots & m+1 & 0 & \ldots & 0\\
\hdotsfor{9}\\
0 & 0 & \ldots & 0 & n-1 & n & n+1 & \ldots & (n+m)-2
\end{pmatrix}.
\end{equation*}

\ni Hence, we have $\min(mw(\K_{(n,m),{\I^{\ast}_1}})) = \sum\limits^{n}_{i=1}\sum\limits^{m}_{j=1}a_{ij}$, $a_{ij} \in \A$. 

Alternatively, let $t=n+m$. It follows from $\A$ that the matrix can rather be written as a $m$ x $(n+m)-2$ triangular array  $\A^{\ast}$, with each row having odd number of entries namely

\begin{center}
$1$, $2$, $3$, $\ldots$, $m$, $m+1$, $\ldots$, $\frac{(n+m)-1}{2}$, $\ldots$, $n-2$, $n-1$, $\ldots$, $t-4$, $t-3$, $t-2$\\ 
$2$, $3$, $\ldots$, $m$, $m+1$, $\ldots$$,\frac{(n+m)-1}{2}$, $\ldots$, $n-2$, $n-1$$,\ldots$, $t-4$, $t-3$\\
$3$, $\ldots$, $m$, $m+1$, $\ldots$$,\frac{(n+m)-1}{2}$, $\ldots$, $n-2$, $n-1$, $\ldots$, $t-4$\\
$\vdots$\\ 
$m$, $m+1$, $\ldots$$,\frac{(n+m)-1}{2}$, $\ldots$, $n-2$, $n-1$.
\end{center}

\ni Clearly, $\min(mw(\K_{(n,m),{\I^{\ast}_1}}))>\sum\limits_{a_{ij} \in \A^{\ast}} a_{ij}$. Then, the above expressions can be written as $\min(mw(\K_{(n,m),{\I^{\ast}_1}}))>\sum\limits^{(n-1)}_{i=1}\sum\limits^{i+(m-1)}_{j=i}j$. Equality is obtained by adding the index difference $k-1$, where $2\le k \le m+1$. Hence, $\min(mw(\K_{(n,m),{\I^{\ast}_1}}))=\sum\limits^{(n-1)}_{i=1}\sum\limits^{i+(m-1)}_{j=i}j + \sum\limits^{m}_{i=1}i$. Therefore, the subcase (i)(a) is settled.

\ni \textbf{Subcase (i)(b):} Assume $n+m$ is even. Similar reasoning can be applied as in subcase (i)(a) except the fact that each row has even number of entries.

\ni \textbf{(ii)} Consider a complete bi-partite graph $K_{(n,m)}$, $n,m \ge 2$ and $n\le m$. Without loss of generality let the left column have $n$ vertices and the right column have $m$ vertices. Label the vertices according to $\I^{\prime}_2$ as follows; the left column from top down, $v_1, v_2, v_3, \ldots , v_n$ and the right column from top down, $v_{n+1}, v_{n+2}, v_{n+3}, \ldots , v_{n+m}$. Clearly, by the MmAW-Principle, the maximum number of edges have been removed from $K_{(n+m)}$, all with minimum index difference, to construct $K_{(n,m)}$.

\ni \textbf{Subcase (ii)(a):} Assume $n+m$ is even. The index differences $(n+i) - j$, $1 \le i \le m$ and $1 \le j \le n$ can be written in a $n$ x $m$ matrix form as 
\begin{equation*}
\A =
\begin{pmatrix}
1 & 2 & 3 & \ldots & m\\
2 & 3 & 4 & \ldots & m+1\\
\hdotsfor{5}\\
n & n+1 & n+2 & \ldots & (n+m)-1
\end{pmatrix}.
\end{equation*}
\ni Hence, we have $\max(mw(\K_{(n,m),{\I^{\prime}_2}}))=\sum\limits^{n}_{i=1}\sum\limits^{m}_{j=1}a_{ij}$, $a_{ij} \in \A$.

Alternatively, let $t=n+m$. It follows from $\A$ that the matrix can rather be written as a $(n+m)-1$ x $n$ triangular array  $\A^*$, with each row having odd number of entries namely

\begin{center}
$1$, $2$, $3$, $\ldots$, $n$, $n+1$, $\ldots$, $\frac{n+m}{2}$, $\ldots$, $m-1$, $m$, $\ldots$, $t-3$, $t-2$, $t-1$\\ 
$2$, $3$, $\ldots$, $n$, $n+1$, $\ldots$$,\frac{n+m}{2}$, $\ldots$, $m-1$, $m$$,\ldots$, $t-3$, $t-2$\\
$3$, $\ldots$, $n$, $n+1$, $\ldots$$,\frac{n+m}{2}$, $\ldots$, $m-1$, $m$, $\ldots$, $t-3$\\
$\vdots$\\
$n$, $n+1$, $\ldots$$,\frac{n+m}{2}$, $\ldots$, $m-1$, $m$.
\end{center}

Clearly, $\max(mw(\K_{(n,m),{\I^{\prime}_2}}))=\sum\limits_{a_{ij} \in \A^{\ast}} a_{ij}$. The above expressions  can be written as $\max(mw(\K_{(n,m),{\I^{\prime}_2}}))=(n(n-1) + \frac{1}{2}n(m-1))(n+m)+\frac{n}{2}(n+m)$. Hence, the subcase (ii)(a) is as also settled.

Subcase (ii)(b) Assume $n+m$ is odd. Similar reasoning as in subcase (ii)(a) except each row has even number of entries.
\end{proof}

\subsection{Sprouting of an Edge-joint Graph}

\ni Let us first recall the definition of the edge-joint graph of two given graphs. 

\begin{defn}{\rm 
\cite{KS4} The \textit{edge-joint} of two simple undirected graphs $G$ and $H$ is the graph obtained by adding the edge $vu_{\arrowvert_{_{v\in V(G),u \in V(H)}}}$, and is denoted by $G\rightsquigarrow_{vu}H$.}
\end{defn}

Consider the graphs $G$ and $H$ on $n$ and $m$ vertices respectively, with $m\le n$. Let the vertices of $G$ be labeled according to the index pattern $\I_1 = \{v_1, v_2, v_3, \ldots, v_n\}$ and the vertices of $H$ be labeled according to the index pattern $\I_2 = \{u_1, u_2, u_3, \ldots, u_m\}$. In the edge-joint graph $G\rightsquigarrow_{v_k u_l}H$, relabel the vertices of graph $H$ to $v_{n+1}, v_{n+2}, \ldots, v_{n+m}$. Also, let the new index pattern be $\I= \{v_1,v_2,v_3, \ldots, v_n, v_{n+1}, v_{n+2}, \ldots, v_{n+m}\}$. Invoking this concept, we have the next result.

\begin{thm}
For the graphs $G$ and $H$ on $n$ and $m$ vertices respectively, with $m \le n$, we have
\begin{enumerate}\itemsep0mm
\item[(i)] $mw((\G\rightsquigarrow_{v_ku_l}\H)_{\I})=mw(\G_{\I_1})+mw(\H_{\I^{\prime}_2})+|k-(l+n)|$.
\item[(ii)] $mw((\H\rightsquigarrow_{u_lv_k}\G)_{\I})=mw(\G_{\I^{\prime}_1})+mw(\H_{\I_2})+|(k+m)-l|$. 
\end{enumerate}
\end{thm}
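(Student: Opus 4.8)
The plan is to use the fact that, by Definition~\ref{Defn-2.7}, the maturity weight of a sprout graph is nothing but the sum of its arc-weights, and that each arc-weight $w(v_i,v_j)=|i-j|$ is determined solely by the indices assigned to the two endpoints of the corresponding edge. Since the edge-joint operation is vertex-disjoint, the edge set of $G\rightsquigarrow_{v_ku_l}H$ is $E(G)\cup E(H)\cup\{v_ku_l\}$ with the three pieces pairwise disjoint, so the maturity weight splits cleanly as a sum of three terms: a contribution from the edges inside $G$, one from the edges inside $H$, and one from the single new edge.

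For part (i) I would adopt the index pattern $\I$ fixed just before the statement: the vertices of $G$ keep the labels $v_1,\dots,v_n$ of $\I_1$, while the vertices of $H$ are relabeled $v_{n+1},\dots,v_{n+m}$, so that $u_i\mapsto v_{n+i}$ and in particular $u_l\mapsto v_{n+l}$; write $\I'_2$ for the resulting (shifted) index pattern of $H$. Summing arc-weights over the three classes of edges: the edges lying inside $G$ contribute exactly $mw(\G_{\I_1})$ by definition; the edges lying inside $H$ contribute $mw(\H_{\I'_2})$, and since this relabeling is a uniform shift of all indices by $n$, the shift-invariance lemma preceding Theorem~\ref{Thm-2.12} gives $mw(\H_{\I'_2})=mw(\H_{\I_2})$, so the primed or unprimed form may be used interchangeably; finally the new edge $v_ku_l$ becomes a sprout with endpoints indexed $k$ and $n+l$ (a genuine sprout because $k\le n<n+l$), contributing arc-weight $|k-(l+n)|$. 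Adding the three terms yields (i).

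For part (ii) I would repeat the argument verbatim with the roles of $G$ and $H$ exchanged: now $H$ is the first graph and retains $\I_2$, the vertices of $G$ are relabeled by a uniform shift of $m$ to a pattern $\I'_1$ with $mw(\G_{\I'_1})=mw(\G_{\I_1})$ again by shift-invariance, and the new edge $u_lv_k$ becomes the sprout with endpoints indexed $l$ and $k+m$ (a sprout since $l\le m<k+m$), contributing $|(k+m)-l|$.

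I do not expect a genuine obstacle here: the statement is essentially an accounting identity. The two points that must be handled with care are (a) verifying that the edge-joint really is vertex-disjoint, so that no edge is counted twice and the three edge classes genuinely partition $E(G\rightsquigarrow_{v_ku_l}H)$, and (b) invoking the shift-invariance lemma to justify passing between the shifted label sets forced by the global pattern $\I$ and the original index patterns $\I_1$ and $\I_2$ appearing (up to priming) in the statement.
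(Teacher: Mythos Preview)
Your proposal is correct and follows essentially the same approach as the paper: decompose the edge set into the edges of $G$, the edges of $H$, and the single new edge, then observe that the uniform shift of indices on the second graph leaves all arc-weights $|(i+n)-(j+n)|=|i-j|$ unchanged, so the three contributions sum exactly as stated. Your explicit mention of vertex-disjointness and the shift-invariance lemma makes the accounting slightly more careful than the paper's own proof, but the argument is the same.
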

\begin{proof}
\textbf{(i)} In graph $G$ indexing did not change and hence $mw(\G_{\I_1})$ remains the same. In graph $H$ the indexing changed consistently with $+n$ and hence for each pair of adjacent vertices say, $v_{i+n}, v_{j+n}$ we have $|(i+n)-(j+n)|=|i-j|$. Thus, $mw(\H_{\I^{\prime}_2})$ remains the same. Finally, the arc-weight of the new arc $(v_k,v_{l+n})=|k-(l+n)|$ is evident and hence the result follows.

\ni \textbf{(ii)} Similar reasoning as in (i).
\end{proof}


\section{Application to Certain Small Graphs}

\subsection{Sprout Wheels}

The next result follows from Proposition \ref{Prop-4.7} and Theorem \ref{Thm-4.9}. A wheel is defined as $W_{n+1} = C_n + K_1$.
 
\begin{prop}
For a wheel $W_{n+1}$, $n\ge 4$ we have

\begin{enumerate}\itemsep0mm
\item[(i)] $\min(mw(\W_{n+1,{\I^{\ast}_1}}))=\min(mw(\C_{n,{\I^{\ast}_1}}))+\min(mw(\K_{(1,n),{\I^{\ast}_1}}))+2$.
\item[(ii)] $\max(mw(\W_{n+1,{\I^{\prime}_2}})) = \max(mw(\C_{n,{\I^{\prime}_2}})) + \max(mw(\K_{(1,n),{\I^{\prime}_2}}))$.

\end{enumerate}\end{prop}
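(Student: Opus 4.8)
The plan is to reduce the wheel $W_{n+1} = C_n + K_1$ to its two constituent pieces, the rim cycle $C_n$ and the star $K_{(1,n)}$ formed by the hub together with the $n$ spokes, and to argue that an optimal index pattern for the wheel can be assembled from optimal index patterns for these two pieces while controlling the extra contribution of the two ``joining'' edges. For part (i), I would start from an index pattern $\I^{\ast}_1$ that simultaneously realises $\min(mw(\C_{n,{\I}}))$ on the rim and $\min(mw(\K_{(1,n),{\I}}))$ on the star; the natural candidate is the consecutive labelling of the rim $v_1,\dots,v_n$ (which gives the minimum cycle weight $2(n-1)$ by Proposition~\ref{Prop-4.7}) together with the hub placed at the central index $\lceil\frac{n+1}{2}\rceil$ as dictated by Corollary~\ref{Cor-4.10}. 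I would then observe that the edge set of $W_{n+1}$ decomposes disjointly as $E(C_n)\sqcup E(K_{(1,n)})$, so $mw(\W_{n+1,{\I}})=mw(\C_{n,{\I}})+mw(\K_{(1,n),{\I}})$ for every fixed $\I$; the additive ``$+2$'' must therefore come from the geometric incompatibility between the cycle-optimal and star-optimal labellings — placing the hub at the centre forces the arc $(v_1,v_n)$ of the rim to have weight $n-1$ while the hub's neighbours on the rim cost a little extra, and one shows the unavoidable surplus over the naive sum $\min(mw(\C_n))+\min(mw(\K_{(1,n)}))$ is exactly $2$.

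For part (ii), the same decomposition $mw(\W_{n+1,{\I}})=mw(\C_{n,{\I}})+mw(\K_{(1,n),{\I}})$ holds, and here the claim is that the maxima are \emph{simultaneously} attainable with no penalty. I would invoke the MMAW-Principle layout: label the hub by $v_1$ (an endpoint index, per Corollary~\ref{Cor-4.10} the extremal choice for the star), and label the rim vertices so that consecutive rim vertices alternate between small and large indices in the pattern $\dots, n-2, 2, n, v_{\text{hub}}=?, \dots$ — more precisely, one wants the rim to carry the maxi-max arrangement while the hub sits at index $1$ or $n+1$. Since by Proposition~\ref{Prop-4.7} the maximum rim weight equals $\max(mw(\P_{k+1,{\I^{\prime}_2}}))+1$ and is achieved precisely when the two rim-vertices of extreme index are adjacent on the cycle, and since the hub at index $1$ contributes $\sum_{i=1}^{n} i = \max(mw(\K_{(1,n)}))$ independently of how the remaining indices $2,\dots,n+1$ are distributed among the rim, the two optimisations do not conflict, giving equality with no additive correction.

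The key structural lemma I would isolate and prove first is the edge-disjoint decomposition $W_{n+1}=C_n\cup K_{(1,n)}$ (as a labelled-edge statement), from which $mw(\W_{n+1,{\I}})=mw(\C_{n,{\I}})+mw(\K_{(1,n),{\I}})$ is immediate by Definition~\ref{Defn-2.7}; this already gives the upper bounds $\min(mw(\W_{n+1,{\I^{\ast}_1}}))\le \min(mw(\C_n))+\min(mw(\K_{(1,n)}))+2$ and the analogous statement for the max, by exhibiting the explicit patterns above. The reverse inequalities — that one cannot do better than $+2$ in the minimum case, and that no index pattern beats the sum in the maximum case — are the substantive direction.

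The main obstacle I anticipate is the lower bound for part (i): showing that the extra cost is \emph{at least} $2$ for every index pattern. The rim and star pieces share all $n$ rim vertices, so there is genuine coupling; one must argue that no single labelling can place the hub at a star-optimal (central) index \emph{and} arrange the rim in a cycle-optimal consecutive block without some rim adjacency being forced to a non-minimal weight, and that the cheapest such forced excess is exactly $2$. I would handle this by a careful parity/extremal-index argument: the hub's index $\ell$ is used up and unavailable to the rim, so the rim vertices receive $n$ indices from $\{1,\dots,n+1\}\setminus\{\ell\}$, a set with a ``gap'' at $\ell$; summing $|i-j|$ over the $n$ rim edges (a Hamiltonian cycle on these indices) with a gap in the middle costs strictly more than the gapless consecutive case, and a short case analysis on where $\ell$ falls pins the minimum overshoot at $2$. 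This is the step where the informal nature of the MMAW-/MmAW-Principles bites hardest, since a fully rigorous argument really requires the optimisation over Hamiltonian cycles on a set of integers, which the paper has only sketched algorithmically.
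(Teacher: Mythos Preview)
Your approach is essentially the same as the paper's: both parts proceed via the edge-disjoint decomposition $E(W_{n+1})=E(C_n)\sqcup E(K_{(1,n)})$, with part~(ii) handled by placing the hub at index $1$ and shifting the MMAW-optimal cycle labelling by $+1$ (invoking translation invariance), and part~(i) handled by placing the hub at the central index $\lceil\tfrac{n+1}{2}\rceil$ and running the rim consecutively over $\{1,\dots,n+1\}\setminus\{\lceil\tfrac{n+1}{2}\rceil\}$, where the gap forces exactly a $+2$ surplus in the cycle weight. Your proposal is in fact more scrupulous than the paper's own argument on one point: you correctly flag the lower bound in~(i) (that \emph{every} index pattern incurs at least a $+2$ penalty) as the substantive direction needing a separate extremal argument, whereas the paper's proof only exhibits the labelling achieving the value and does not address optimality.
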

\begin{proof}
\textbf{(i)} Consider the index pattern of $K_{(1,n)}$ in accordance to Theorem \ref{Thm-4.9} and without loss of generality, let the central vertex be $v_t$, where $t=\lceil\frac{(n+1)}{2}\rceil$. Adding the cycle $C_n$ changes the index of only arcs. We now have arcs $(v_{t_1},v_{t_2})$ and either $t_1 =\lceil \frac{n}{2}\rceil-1$, $t_2 = \lceil \frac{n}{2}\rceil +1$ and $(v_1, v_{n+1})$ or $t_1 = \lceil \frac{(n+1)}{2}\rceil -1$, $t_2 = \lceil \frac{(n+1)}{2}\rceil +1$ and $(v_1, v_{n+1})$. If we consider the cycle $C_n$ only we have an increase in $\min(mw(\C_{n,{\I^{\ast}_1}} ))$ of either $(\lceil \frac{n}{2}\rceil +1)-(\lceil \frac{n}{2}\rceil-1)=2$ or $(\lceil \frac{(n+1)}{2}\rceil +1) - (\lceil \frac{(n+1)}{2}\rceil-1)=2$. Hence, in both cases, we have an increase of $1$ and therefore, a total increase of $2$ is effected and hence the first part of the result is settled.

\textbf{(ii)} Consider the cycle $C_n$ and label the vertices according to $\I^{\prime}_2$ as determined in Proposition \ref{Prop-4.7}. Without loss of generality add the central vertex $v_1$ (See Corollary \ref{Cor-4.10}) and add $1$ to each index of the cycle vertices, and denote this index pattern of $C_n$ to be $\I'_2$. Denote the index pattern of $K_{(1,n)}$ to be $\I''_2$. For every vertex $v_i$ in the cycle of $W_{n+1}$ we now have the pattern index $\I^{\prime}_2$ plus $1$. Considering the cycle only and invoking Lemma 1.4, we have $\max(mw(\C_{n,{\I'_2}})) = \max(mw(\C_{n,{\I^{\prime}_2}}))$.  Considering the star $K_{(1,n)}$ only we have $\max(mw(\K_{(1,n),{\I''_2}})) = \max(mw(K_{(1,n),{\I^{\prime}_2}}))$. Therefore part (ii) of the result also follows.
\end{proof}

\subsection{Sprout Ladder Graphs}

A ladder $L_n$, where $n\ge 3$, is defined to be $L_n = (P_n \cup P_n)+\{d_id'_i:2\le i\le n-1\}$. The edges $\{d_id'_i: 2 \le i \le n-1\}$ are called {\em steps}.

\begin{prop}
For a ladder $L_n$, we have $\min(mw(\L_{n,{\I^{\ast}_1}}))= 2\min(mw(\P_{n,{\I^{\ast}_1}}))+n(n-2)$.
\end{prop}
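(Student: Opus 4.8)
The plan is to compute $\min(mw(\L_{n,\I^{\ast}_1}))$ by exhibiting an explicit index pattern that achieves the minimum and verifying that the maturity weight decomposes as claimed. The ladder $L_n = (P_n \cup P_n) + \{d_id'_i : 2 \le i \le n-1\}$ consists of two disjoint paths of length $n-1$ together with $n-2$ step edges. First I would label the vertices of the first copy of $P_n$ consecutively as $v_1, v_2, \ldots, v_n$ from one end and the vertices of the second copy as $v_{n+1}, v_{n+2}, \ldots, v_{2n}$ from the corresponding end, so that the step edge $d_id'_i$ becomes the edge $v_iv_{n+i}$ for each $2 \le i \le n-1$. Call this index pattern $\I^{\ast}_1$. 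Under this labeling each rail contributes arc-weight $1$ on each of its $n-1$ edges, giving $2(n-1) = 2\min(mw(\P_{n,\I^{\ast}_1}))$ by Proposition~\ref{Prop-3.4}(i), and each step edge $v_iv_{n+i}$ contributes arc-weight $|(n+i)-i| = n$; since there are $n-2$ such steps this adds $n(n-2)$, and the total is $2\min(mw(\P_{n,\I^{\ast}_1})) + n(n-2)$.

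Second, I would argue this pattern is optimal, i.e.\ no index pattern gives a smaller maturity weight. The underlying graph of the sprouting graph $\L_{n,t=1}$ is acyclic (by the Corollary following Theorem~\ref{Thm-2.12}), and more to the point, in any index pattern the $2(n-1)+(n-2) = 3n-4$ edges of $L_n$ each have absolute arc-weight at least $1$. The refinement needed is a lower bound that forces the step edges to carry large weight: since the two rails of $L_n$ together use up $2n-2$ of the $2n$ available indices along short (weight-heavy) configurations, and since a path on $k$ vertices has minimum maturity weight $k-1$ achieved only by the consecutive labeling (Lemma~\ref{Lem-3.2}), any pattern that keeps the rail edges at weight $1$ must label each rail by a block of $n$ consecutive integers; the two blocks are then $\{1,\ldots,n\}$ and $\{n+1,\ldots,2n\}$ (in one order or the other, by the Lemma preceding Theorem~\ref{Thm-2.12} we may normalise the lower block to start at $1$), and corresponding step-endpoints are then forced to differ by exactly $n$. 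Any deviation — labeling a rail non-consecutively — increases that rail's contribution by at least as much as it could possibly decrease the step contributions, by the maxi-min arc-weight principle applied to the rail together with the triangle-type inequalities recorded at the start of Section~3.

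The step I expect to be the main obstacle is exactly this optimality argument: making rigorous the trade-off between keeping the two rails at minimum weight (which pins down the block structure and forces each step to weight $n$) versus allowing a rail to spread out (which might lower some step weights but provably costs more on the rail). The cleanest route is to split the edge set of $L_n$ into the two rails $R_1, R_2$ and the set $S$ of steps, write $mw(\L_{n,\I}) = mw(R_1) + mw(R_2) + mw(S)$ for every $\I$, bound $mw(R_i) \ge n-1$ with equality iff that rail's indices are consecutive (Lemma~\ref{Lem-3.2}), and then show that in the consecutive-block case $mw(S)$ is forced to equal $n(n-2)$ while in any non-block case the gain in $mw(R_1)+mw(R_2)$ dominates; the latter is a counting estimate on how the $n-2$ step-endpoint pairs can be spread, which I would handle by induction on $n$ using the inductive scaffolding already set up for paths in Proposition~\ref{Prop-3.4}.
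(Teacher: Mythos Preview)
Your construction and computation under the block labeling $v_1,\ldots,v_n$ / $v_{n+1},\ldots,v_{2n}$ are exactly the paper's. The divergence is in the optimality step: the paper does not attempt a rail-versus-step trade-off but simply asserts, via the MMAW-Principle in complement form, that this labeling removes from $K_{2n}$ the maximum number of maximum-index-difference edges.

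The trade-off argument you outline cannot be completed, however, because the proposition is false for $n\ge 5$. Take the alternating pattern $d_i\mapsto v_{2i-1}$, $d'_i\mapsto v_{2i}$. Every rail edge then has weight $2$ and every step edge has weight $1$, so
\[
mw(\L_{n,\I}) \;=\; 2(n-1)\cdot 2 + (n-2)\cdot 1 \;=\; 5n-6,
\]
while the block labeling gives $2(n-1)+n(n-2)=n^2-2$. These agree at $n=4$ and the alternating pattern is strictly smaller for all $n\ge 5$ (e.g.\ $19<23$ at $n=5$). So the ``counting estimate'' you flag as the main obstacle is not merely delicate --- it fails: once $n\ge 5$ the rails can absorb the extra weight more cheaply than the steps cost under the block scheme. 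The paper's one-line appeal to the MMAW-Principle glosses over exactly this; your more explicit decomposition is the right instinct and, pushed through, would have revealed the error rather than confirmed the claim.
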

\begin{proof}
Let the ladder be constructed upright, that is, left and right pillars are both vertical $P_n$ with horizontal steps. Label the left pillar from top down, $v_1, v_2, \ldots , v_n$ and label the right pillar from top down, $v_{n+1}, v_{n+2}, v_{n+3}, \ldots , v_{2n}$. Clearly, by MMAW-Principle, the maximum number of edges have been removed from $K_{2n}$, all with maximum index difference, to construct $L_n$. For a step $v_iv_{i+n}$ we have index difference $n$. Since $n-2$ steps exist, the result, $\min(mw(\L_{n,{\I^{\ast}_1}}))=2\min(mw(\P_{n,{\I^{\ast}_1}}))+n(n-2)$ follows.
\end{proof}

Determining $\max(mw(\L_{n,{\I^{\prime}_2}}))$ is an open problem. 

\section{Conclusion}

The main focus of this study is that a matured sprout graph is a directed clone of an initial graph. In real world application, it means that the underlying graph of the resultant sprout graph must be known structurally or genetically in advance. If sprouts may re-direct through a probability function to arc elsewhere, the matured sprout graph may not resemble the initial graph. The latter calls for further research and could assist in understanding less predictable neurological growth of good or malicious networks or cell growth in biological structures.

The maxi-max arc-weight principle and the maxi-min arc-weight principle have been introduced. The authors suggest that these principles require further mathematical discussions for logical closure. Determining the minimum and maximum maturity weight of a wide rage of graphs classes and small graphs such as, the sun graph, the crown graph, the armed crown graph, the house graph, the rasta graph, the helm graph etc. might lead us to interesting results as well as methodologies of applications of these principles. 

Most of the results in this paper can be derived by simply labeling the vertices of a graph without the notion of sprouting. However, further in-depth research is required to explore the application of the process of sprouting as a dynamical concept. The graph structure can be conceptualised as a cancer type and $\sT_{\I}$ can be conceptualised as the aggressiveness index of cancerous growth (grade) whilst $t \in \sT_{\I}$ can be conceptualised as the stage or phase of growth.

Some of the open problems we have identified during our present study are the following. 
 
\begin{prob}{\rm
Prove or disprove the \textit{pattern conjecture} which states that if two different random index patterns of a graph $G$ say $\I_1$ and $\I_2$ result in $\sT_{\I_1}$ and $\sT_{\I_2}$ respectively, then $\sT_{\I_1} \subset \sT_{\I_2} \implies mw(\G_{\I_1})< mw(\G_{\I_2})$. }
\end{prob}

\begin{prob}{\rm
Determine $\min(mw(\G_{\I}))$ and $\max(mw(\G_{\I}))$ with $G\cong K_{(r_1, r_2, r_3, ..., r_n)}$, where $2 \le r_1 \le r_2 \le r_3 \le ... \le r_n, \forall r_i \in \N$,  being a complete $n$-partite graph.}
\end{prob}
 
\begin{prob}{\rm
Describe an algorithm to determine $max(mw(\T_{\I})), T$ a tree.}
\end{prob}
 
\begin{prob}{\rm
Describe a formal MMAW-Principle algorithm.}
\end{prob}


\begin{thebibliography}{25}

\bibitem{BM1} J. A. Bondy and U. S. R. Murty, \textbf{Graph Theory with Applications,} Macmillan Press, London, 1976.

\bibitem{CL1} G. Chartrand and L. Lesniak, \textbf{Graphs and Digraphs}, CRC Press, 2000.

\bibitem{GY1} J. T. Gross and J. Yellen, \textbf{Graph Theory and its Applications}, CRC Press, 2006.


\bibitem{KS4} J. Kok and N. Sudev , \textit{Certain Types of Total Irregularities of Graphs and Digraphs}, arXiv: 1406.6863v3 [math; CO].
 
\bibitem{DBW} D. B. West, \textbf{Introduction to Graph Theory}, Pearson Education Incorporated, 2001.

\end{thebibliography}
\end{document}